\numberwithin{equation}{section}
\def\1{\raisebox{2pt}{\rm{$\chi$}}}
\newtheorem{theorem}{Theorem}[section]
\newtheorem{lemma}[theorem]{Lemma}
\newtheorem{remark}[theorem]{Remark}
\newcommand{\R}{{\mathbb R}}
\newcommand{\N}{{\mathbb N}}
\newcommand{\strt}[1]{\rule{0pt}{#1}} %
\newcommand{\car}[1]{\chi_{\strt{1.5ex}#1}}  %
\newcommand{\pa}[1]{\left(#1\right) } %
\newcommand{\set}[2]{\left\{#1 :#2\right\}} %
\def\1{\raisebox{2pt}{\rm{$\chi$}}}
\newcommand{\abs}[1]{\left|#1\right|}
\newcommand{\dif}{\mathrm{d}}
\def\vint_#1{\mathchoice%
        {\mathop{\kern 0.2em\vrule width 0.6em height 0.69678ex depth -0.58065ex
                \kern -0.8em \intop}\nolimits_{\kern -0.4em#1}}%
        {\mathop{\kern 0.1em\vrule width 0.5em height 0.69678ex depth -0.60387ex
                \kern -0.6em \intop}\nolimits_{#1}}%
        {\mathop{\kern 0.1em\vrule width 0.5em height 0.69678ex
            depth -0.60387ex
                \kern -0.6em \intop}\nolimits_{#1}}%
        {\mathop{\kern 0.1em\vrule width 0.5em height 0.69678ex depth -0.60387ex
                \kern -0.6em \intop}\nolimits_{#1}}}
\def\vintslides_#1{\mathchoice%
        {\mathop{\kern 0.1em\vrule width 0.5em height 0.697ex depth -0.581ex
                \kern -0.6em \intop}\nolimits_{\kern -0.4em#1}}%
        {\mathop{\kern 0.1em\vrule width 0.3em height 0.697ex depth -0.604ex
                \kern -0.4em \intop}\nolimits_{#1}}%
        {\mathop{\kern 0.1em\vrule width 0.3em height 0.697ex depth -0.604ex
                \kern -0.4em \intop}\nolimits_{#1}}%
        {\mathop{\kern 0.1em\vrule width 0.3em height 0.697ex depth -0.604ex
                \kern -0.4em \intop}\nolimits_{#1}}}
\newcommand{\aveint}[2]{\mathchoice%
        {\mathop{\kern 0.2em\vrule width 0.6em height 0.69678ex depth -0.58065ex
                \kern -0.8em \intop}\nolimits_{\kern -0.45em#1}^{#2}}%
        {\mathop{\kern 0.1em\vrule width 0.5em height 0.69678ex depth -0.60387ex
                \kern -0.6em \intop}\nolimits_{#1}^{#2}}%
        {\mathop{\kern 0.1em\vrule width 0.5em height 0.69678ex depth -0.60387ex
                \kern -0.6em \intop}\nolimits_{#1}^{#2}}%
        {\mathop{\kern 0.1em\vrule width 0.5em height 0.69678ex depth -0.60387ex
                \kern -0.6em \intop}\nolimits_{#1}^{#2}}}
\newcommand{\loc}{\mathrm{loc}}
\newcommand{\vertii}[1]{{\left\vert\kern-0.25ex\left\vert\kern-0.25ex  #1 
    \kern-0.25ex\right\vert\kern-0.25ex\right\vert}}
\title{On the weighted  inequality between the Gagliardo and Sobolev seminorms}
\author[R. Hurri-Syrj\"anen]{Ritva Hurri-Syrj\"anen}
\address[Ritva Hurri-Syrj\"anen]{Department of Mathematics and Statistics, Pietari Kalmin katu 5, FI-00014 University of Helsinki, Finland}
 \email{ritva.hurri-syrjanen@helsinki.fi}
\author[J. C. Mart\'inez-Perales]{Javier C. Mart\'inez-Perales}
\address[Javier C. Mart\'inez-Perales]{Calle Nueva, 18, Manilva, M\'alaga, Spain} \email{javicemarpe@gmail.com}
\author[C. P\'erez]{Carlos P\'erez}
\address[Carlos P\'erez]{Department of Mathematics, University of the Basque Country, IKERBASQUE 
(Basque Foundation for Science) and
BCAM \textendash  Basque Center for Applied Mathematics, Bilbao, Spain}
\email{cperez@bcamath.org}
\author[A. V.\! V\"ah\"akangas]{Antti V. V\"ah\"akangas}
\address[Antti V. V\"ah\"akangas]{University of Jyvaskyla, Department of Mathematics and Statistics, P.O. Box 35, FI-40014 University of Jyvaskyla, Finland} 
\email{antti.vahakangas@iki.fi}
\keywords{Gagliardo seminorm, Sobolev seminorm, Muckenhoupt weight.}
\subjclass[2020]{Primary: 46E35. Secondary: 42B25.}
\newcommand{\vertiii}[1]{{\left\vert\kern-0.25ex\left\vert\kern-0.25ex\left\vert #1 
    \right\vert\kern-0.25ex\right\vert\kern-0.25ex\right\vert}}
\begin{document}

\thanks{
C. P. is supported by grant  PID2020-113156GB-I00, Spanish Government; by the Basque Government through grant IT1247-19 and the BERC 2014-2017 program  and by the BCAM Severo Ochoa accreditation CEX2021-001142-S, Spanish Government. He is also very grateful to Department of Mathematics of the University of Jyvaskyla where the 3rd author was a visiting faculty and where this research was carried out. }

\begin{abstract} 
We prove  weighted inequalities between the Gagliardo and Sobolev seminorms
and also between the Marcinkiewicz quasi-norm and the Sobolev seminorm.
With  $A_1$ weights we improve earlier results of Bourgain, Brezis, and Mironescu.
\end{abstract}

\maketitle

%
%

%

\section{Introduction}

The classical $(1,p)$-Poincar\'e inequality for $1\le p<\infty$ establishes the existence of a dimensional constant $C(n)>0$  such that, for any continuously differentiable function $u$, the inequality
\begin{equation}\label{1-pPI}
\vint_Q \lvert u(x)-u_Q\rvert\, \dif x  \le  C(n)\, \ell(Q)\left(\vint_Q |\nabla u(x)|^p\,\dif x\right)^{1/p},
\end{equation}
holds for any cube $Q$, that is, for any cartesian product of $n$ intervals of the same side length $\ell(Q)$. Here, and along the rest of the paper,  $f_Q$ and $\vint_Q f$ denote the average of the function $f$ over $Q$, that is $f_Q=\vint_Q f=\lvert Q\rvert^{-1}\int_Q f(x) \dif x$. 

The importance of inequalities like  \eqref{1-pPI} in both weighted and unweighted setting does not need any justification, since they are among the most important tools in the theory of partial differential equations and Sobolev spaces. 
We refer to \cite{HMPV} for  a more complete list of references following our point of view.

{More} recently, fractional versions of these Poincar\'e inequalities  have attracted the attention of many authors. Indeed, on one hand, one can easily prove the existence of a constant $C(n)>0$ such that, for any locally integrable function $u$, the fractional $(1,p)$-Poincar\'e inequality%
\begin{equation}\label{FPI-rough}
\vint_Q \lvert u(x)-u_Q\rvert\,\dif x \leq  C(n) \ell(Q)^\delta\left(\vint_Q\int_Q\frac{|u(x)-u(y)|^p}{|x-y|^{n+\delta p}}\,\dif y\,\dif x\right)^{1/p}
\end{equation}
holds for any cube $Q$ in $\mathbb{R}^n$, for any $p\geq 1$ and for any $\delta>0$. Nevertheless, it turns out that \eqref{FPI-rough}  is far from being optimal. 
Indeed, the results in  \cite{BBM2} and a simple scaling argument  shows the existence of a constant $C(n)$ such that the highly interesting estimate
%
\begin{equation} \label{FPI with gain}
\vint_Q |u(x)-u_Q|\,\dif x\leq  C(n)\,\frac{(1-\delta)^{1/p}}{(n-\delta p)^{1/p'}} \ell(Q)^\delta\left(\vint_Q\int_Q\frac{|u(x)-u(y)|^p}{|x-y|^{n+\delta p}}\,\dif y\,\dif x\right)^{1/p}.
\end{equation}
holds for any cube $Q$ in $\mathbb{R}^n$, for any $1/2\leq \delta<1$ and for any  $1\leq p<n/\delta$. 
Observe that the factor $(1-\delta)^{1/p}$ can be very close to zero, providing an extra gain which may be thought as a new type of self-improving phenomenon. Actually, the factor $(1-\delta)^{1/p}$ is needed to avoid the loss of information that occurs when the parameter $\delta$ is close to $1$, as observed in \cite{B}.
In \cite{MS1} a different PDE approach on these inequalities was found, and in \cite{Mi}  
a very interesting general method combining interpolation and extrapolation of functions was given.
We  also refer to the recent paper \cite{DM2}.

On the other hand, according to \cite{BBM1},
 it turns out that the right hand side of \eqref{FPI with gain} satisfies
\begin{equation}\label{e.A}
\ell(Q)^\delta \left(\vint_Q\int_Q\frac{|u(x)-u(y)|^p}{|x-y|^{n+\delta p}}\,\dif y\,\dif x\right)^{1/p} \leq \frac{C(n)}{(1-\delta)^{1/p}} \ell(Q)\left(\vint_Q |\nabla u(x)|^p\,\dif x\right)^{1/p}
\end{equation}
for some dimensional constant $C(n)>0$.
%
Hence, the $(1,p)$-Poincar\'e inequality \eqref{1-pPI} 
for 
$1\le p<n/\delta$  with any $\delta \in [1/2, 1)$ 
can be proved as the result of the combination of the two intermediate inequalities \eqref{FPI with gain} and  \eqref{e.A}.

\section{Statement of main results}\label{sec:main}

This paper is an outgrowth of our work \cite{HMPV} where we proved weighted extensions of inequality \eqref{FPI with gain}  in the case of $A_1$ weights. 
We omit their formulation here. 
The purpose of this paper is to extend inequalities like \eqref{e.A} to the context of $A_1$ weights.

\begin{theorem}\label{weightedBBMp>1}  Let  $0<\delta< 1$,  $1<p< \infty$, $w\in A_1$ and $n\ge 2$.  There exists a dimensional constant $C(n)>0$  such that, for any cube $Q$ of $\mathbb{R}^n$ and any  $u\in C^1(Q)$, the inequality 
%
\begin{equation*}
\ell(Q)^{\delta} \left(\int_Q \int_{Q} \frac{\lvert u(x)-u(y)\rvert^p}{\lvert x-y\rvert^{n+\delta p}}\,\dif y\,w(x)\,\dif x\right)^{\frac{1}{p}}\leq  \frac{C(n)p'}{{ (1-\delta)^\frac{1}p}} [w]_{A_1}^{\frac{2}{p}}\ell(Q) 
\left(\int_{Q} \lvert \nabla u(x)\rvert^p\,w(x)\,\dif x\right)^{\frac{1}{p}}.
\end{equation*}
holds.
\end{theorem}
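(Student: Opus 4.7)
The plan is to combine a Haj\l{}asz-type pointwise estimate with two classical weighted bounds: the local $A_1$-weighted Riesz potential estimate and the sharp $A_1$-weighted $L^p$ bound for the Hardy--Littlewood maximal operator. Setting $g := |\nabla u|\chi_Q$ (extended by zero outside $Q$), I would first invoke the pointwise inequality
\[
|u(x)-u(y)|\le c_n|x-y|\bigl(Mg(x)+Mg(y)\bigr),\qquad x,y\in Q,
\]
where $M$ is the Hardy--Littlewood maximal operator on $\R^n$. Raising to the $p$-th power via $(a+b)^p\le 2^{p-1}(a^p+b^p)$ bounds the left-hand side of the theorem by
\[
c_n\int_Q\int_Q\frac{Mg(x)^p+Mg(y)^p}{|x-y|^{n-(1-\delta)p}}\,\dif y\,w(x)\,\dif x.
\]

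Next, I would estimate each summand by Fubini. The summand with $Mg(x)^p$ is controlled by the polar-coordinates computation $\int_Q|x-y|^{(1-\delta)p-n}\,\dif y\le c_n\ell(Q)^{(1-\delta)p}/((1-\delta)p)$, obtained after enlarging $Q$ to $B(x,\sqrt n\,\ell(Q))$. For the summand with $Mg(y)^p$, switching the order of integration reduces matters to the local $A_1$-Riesz potential estimate
\[
\int_Q\frac{w(x)\,\dif x}{|x-y|^{n-(1-\delta)p}}\le\frac{c_n[w]_{A_1}w(y)\,\ell(Q)^{(1-\delta)p}}{(1-\delta)p},
\]
which follows from a dyadic-annulus decomposition around $y$ together with $w(B(y,r))\le c_n[w]_{A_1}w(y)r^n$ and a uniform bound on the resulting geometric series $\sum_{k\ge 0}2^{-k(1-\delta)p}=(1-2^{-(1-\delta)p})^{-1}$. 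Collecting these and multiplying by $\ell(Q)^{\delta p}$ produces
\[
\ell(Q)^{\delta p}\int_Q\int_Q\frac{|u(x)-u(y)|^p}{|x-y|^{n+\delta p}}\,\dif y\,w(x)\,\dif x\le\frac{c_n[w]_{A_1}\ell(Q)^p}{(1-\delta)p}\int_Q(Mg)^p\,w\,\dif x.
\]

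The final step is to apply the sharp $A_1$-weighted maximal inequality $\|Mf\|_{L^p(w)}\le c_np'[w]_{A_1}^{1/p}\|f\|_{L^p(w)}$ to $g$; raising to the $p$-th power contributes the additional factor $c_n^p(p')^p[w]_{A_1}$, and taking $p$-th roots (absorbing the harmless quantity $p^{-1/p}\le 1$ into the dimensional constant) produces precisely the claimed bound $c_np'[w]_{A_1}^{2/p}(1-\delta)^{-1/p}\ell(Q)\|\nabla u\|_{L^p(Q,w)}$. The main obstacle is the use of this sharp maximal bound: its exact exponent $1/p$ on $[w]_{A_1}$ and its linear dependence on $p'$ are precisely what produce the factor $p'$ and one of the two copies of $[w]_{A_1}^{1/p}$ in the final constant; the second $[w]_{A_1}^{1/p}$ is contributed by the weighted Riesz-potential step, while the $(1-\delta)^{-1/p}$ is generated by the $1/((1-\delta)p)$ appearing in both kernel estimates.
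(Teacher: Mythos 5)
Your proof is correct and arrives at exactly the stated constant, but the packaging is genuinely different from the paper's. Both arguments start from the same Haj\l{}asz-type pointwise bound $|u(x)-u(y)|\le c_n|x-y|\bigl(M(g\chi_Q)(x)+M(g\chi_Q)(y)\bigr)$ and the same splitting of the double integral into the $M g(x)^p$ and $Mg(y)^p$ pieces. From there the paper keeps $w\,dx$ as a general Borel measure $\mu$: it uses Lemma \ref{New-pointwise-B} (the inequality $I_\alpha(\chi_Q\mu)\le C(n)\alpha^{-1}\ell(Q)^\alpha M(\chi_Q\mu)$) to handle the inner integrals, then applies the Fefferman--Stein bound $\|Mf\|_{L^p(d\mu)}\le C(n)p'\|f\|_{L^p(M\mu)}$ to both pieces, producing $M(\mu\chi_Q)$ and $M^2(\mu\chi_Q)$ and hence the measure-level inequality \eqref{e.general1}; the hypothesis $w\in A_1$ enters only at the last line via $M^2(w\chi_Q)\le[w]_{A_1}^2w$. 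You instead invoke $A_1$ immediately: a direct dyadic-annulus bound for the weighted Riesz potential $\int_Q|x-y|^{(1-\delta)p-n}w(x)\,\dif x\le C(n)[w]_{A_1}w(y)\ell(Q)^{(1-\delta)p}/((1-\delta)p)$ (one copy of $[w]_{A_1}$), and then the $A_1$-weighted maximal inequality $\|Mg\|_{L^p(w)}\le C(n)p'[w]_{A_1}^{1/p}\|g\|_{L^p(w)}$ (another copy of $[w]_{A_1}$ after raising to the $p$th power); the latter is itself the Fefferman--Stein inequality combined with $Mw\le[w]_{A_1}w$, so the underlying tools coincide even though the order of operations does not. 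The trade-off is that your route is shorter and stays entirely in the weighted world, while the paper's route produces the extra dividend \eqref{e.general1} valid for arbitrary Borel measures. One technical remark applying to both arguments: when $(1-\delta)p\ge n$ the kernel $|x-y|^{(1-\delta)p-n}$ is no longer singular and Lemma \ref{New-pointwise-B} (respectively your dyadic-annulus computation) produces the extra factor $n^{(1-\delta)p/2}$, but this is harmless since it becomes $n^{(1-\delta)/2}\le\sqrt n$ after the $p$th root, so the constant remains dimensional; it is worth keeping this regime in mind when writing the estimate $\frac{1}{1-2^{-(1-\delta)p}}\lesssim\frac{1}{(1-\delta)p}$, which by itself only holds for $(1-\delta)p$ bounded, and should be replaced by $\bigl(\frac{1}{1-2^{-(1-\delta)p}}\bigr)^{1/p}\lesssim\frac{1}{(1-\delta)^{1/p}}$ after the $p$th root, which does hold uniformly.
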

This is an interesting result since it provides a weighted inequality between the fractional and classical Sobolev norms. It is a weighted extension of the more classical inequality given in  Section~\ref{Appendix}. Since the weighted $(1,p)$-Poincar\'e inequality holds for any Muckenhoupt weight in $A_p$ (see \cite{FKS}), we believe that the  above result should also hold for any $A_p$ weight. 

\begin{theorem}\label{weightedBBMp=1} Let  $0<\delta< 1$, $w\in A_1$ and $n\ge 2$. There exists a dimensional constant $C(n)>0$  such that, for any cube $Q$ in $\R^n$  and any $u\in C^1(Q)$, the inequality
\begin{equation*}
\ell(Q)^{\delta} \int_Q \int_{Q} \frac{\lvert u(x)-u(y)\rvert}{\lvert x-y\rvert^{n+\delta}}\,\dif y\,w(x)\,\dif x   \leq C(n)\, \frac{[w]_{A_1}}{(1-\delta)^2} \ell(Q)
\, \int_Q |\nabla u(x)|w(x)\,\dif x
\end{equation*}
holds.

\end{theorem}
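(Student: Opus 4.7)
The plan is to adapt the standard proof of the BBM-type estimate~\eqref{e.A} to the $A_1$-weighted $L^1$ setting; note that Theorem~\ref{weightedBBMp>1} cannot be invoked by taking $p\to 1$ since the factor $p'$ blows up. Starting from the convexity pointwise bound
\[
|u(x)-u(y)|\le |x-y|\int_0^1 |\nabla u((1-t)x+ty)|\,\dif t,
\]
valid on the convex set $Q$, substituting into the left-hand side, and using Fubini to bring the $t$-integral to the outside, it suffices to bound, for each $t\in(0,1)$,
\[
\int_Q \int_Q \frac{|\nabla u((1-t)x+ty)|}{|x-y|^{n+\delta-1}}\,w(x)\,\dif y\,\dif x.
\]

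I would split the $t$-range into $[0,1/2]$ and $[1/2,1]$ and treat only the first (the second is simpler since $t$ is bounded away from $0$). Fixing $t$ and $x$, I would change variables $z=(1-t)x+ty$, giving $\dif y=t^{-n}\dif z$, $|x-y|=|x-z|/t$, and $z\in (1-t)x+tQ\subset Q$; crucially, this forces $|x-z|\le t\sqrt{n}\,\ell(Q)$. Swapping the orders of integration then reduces the expression to
\[
t^{\delta-1}\int_Q |\nabla u(z)|\left(\int_{A_{t,z}} \frac{w(x)}{|x-z|^{n+\delta-1}}\,\dif x\right)\dif z,
\]
where $A_{t,z}\subset Q\cap B(z,t\sqrt{n}\,\ell(Q))$.

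The heart of the proof is the $A_1$-weighted Riesz-type bound
\[
\int_{B(z,R)} \frac{w(x)}{|x-z|^{n+\delta-1}}\,\dif x \le \frac{C(n)[w]_{A_1}}{1-\delta}\, R^{1-\delta}\,w(z),
\]
which I would obtain via a dyadic annular decomposition around $z$, the $A_1$ inequality $\vint_B w\le [w]_{A_1}w(z)$ valid for $z\in B$, and summation of a geometric series whose ratio $2^{-(1-\delta)}$ yields the $(1-\delta)^{-1}$ factor since $1-2^{-(1-\delta)}\gtrsim 1-\delta$. Applying this with $R=t\sqrt{n}\,\ell(Q)$, the $t^{\delta-1}$ and $t^{1-\delta}$ factors cancel, so the bound is $t$-independent. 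Trivial integration in $t$ and multiplication by $\ell(Q)^\delta$ then complete the argument (and in fact give $(1-\delta)^{-1}$ in place of the stated $(1-\delta)^{-2}$).

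The main obstacle I anticipate is keeping the dependence on $\delta$ sharp throughout. Using the cruder containment $A_{t,z}\subset Q$ would force the $t$-integration to produce a factor $\int_0^{1/2}t^{\delta-1}\,\dif t\sim 1/\delta$, which is unacceptable as $\delta\to 0$; it is essential to exploit that $A_{t,z}$ lies in a ball of radius $\sim t\ell(Q)$ around $z$, so that the $A_1$ annular estimate contributes a $t^{1-\delta}$ that cancels the $t^{\delta-1}$ coming from the change of variables. Tracking these exponents carefully is what makes the final estimate uniformly controlled on the entire interval $(0,1)$.
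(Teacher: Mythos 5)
Your proof is correct, and it takes a genuinely different route from the paper's proof of Theorem~\ref{weightedBBMp=1}. The paper starts from the pointwise bound in Lemma~\ref{New-pointwise-A}, which interpolates between the maximal function at $x$ and at $y$, then splits $Q\times Q$ according to which of $M(g\chi_Q)(x)$, $M(g\chi_Q)(y)$ dominates, and combines the Coifman--Rochberg $A_1$ property of $M(g\chi_Q)^{1/n'}$ with the Hedberg-type bound \eqref{lem-eq1}, Kolmogorov's inequality, and the Fefferman--Stein weak-type estimate. That chain incurs a factor $(1-\delta)^{-1}$ twice (once from \eqref{lem-eq1}, once from Kolmogorov), which is the origin of the $(1-\delta)^{-2}$ in the statement. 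Your argument is instead the direct ``weighted-ization'' of the elementary proof of the unweighted case given in the Appendix (Lemma~\ref{l.fractional_functional}): fundamental theorem of calculus along the segment $[x,y]$, the affine change of variables $z=(1-t)x+ty$ that produces the factor $t^{\delta-1}$ and forces $|x-z|\le t\sqrt n\,\ell(Q)$, and a local Riesz estimate. The only new ingredient is the $A_1$-weighted dyadic-annuli bound
\[
\int_{B(z,R)}\frac{w(x)}{|x-z|^{n-(1-\delta)}}\,\dif x\le \frac{C(n)[w]_{A_1}}{1-\delta}\,R^{1-\delta}w(z)\quad\text{for a.e.\ }z,
\]
which is exactly the statement that $M w\le[w]_{A_1}w$ a.e., applied on each dyadic annulus, and costs only a single $(1-\delta)^{-1}$. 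Because the $t^{\delta-1}$ and $R^{1-\delta}=(t\sqrt n\,\ell(Q))^{1-\delta}$ factors cancel, the $t$-integral is uniformly bounded and you never lose a $\delta^{-1}$ either (your observation about not using the crude containment $A_{t,z}\subset Q$ is exactly the right point, and is also the mechanism used in the paper's Appendix proof, Case 1 versus Case 2). The outcome is that your argument actually yields
\[
\ell(Q)^{\delta}\int_Q\int_Q\frac{|u(x)-u(y)|}{|x-y|^{n+\delta}}\,\dif y\,w(x)\,\dif x\le \frac{C(n)\,[w]_{A_1}}{1-\delta}\,\ell(Q)\int_Q|\nabla u|\,w\,\dif x,
\]
i.e.\ a \emph{sharper} constant $(1-\delta)^{-1}$ in place of the paper's $(1-\delta)^{-2}$. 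This is precisely the dependence the authors conjecture to be optimal in the remark after Theorem~\ref{weightedBBMp=1-weak}. A minor point: the splitting into $t\in[0,1/2]$ and $t\in[1/2,1]$ is unnecessary --- the cancellation of the $t$-powers makes the estimate uniform over all $t\in(0,1]$ at once --- but it does no harm. Your method also extends to general Borel measures $\mu$ by replacing the $A_1$ bound on each annulus with $\mu(B(z,\rho))\lesssim\rho^nM^c(\chi_Q\mu)(z)$, yielding $M(\chi_Q\mu)$ on the right-hand side, which matches the structure of the paper's general-measure estimates.
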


 We prove variants of Theorem \ref{weightedBBMp>1} and Theorem \ref{weightedBBMp=1} also for Borel measures $\mu$ in general.
We refer to inequality \eqref{e.general1} and
Section \ref{s.ps}.

We believe that neither of the two theorems above are  fully  sharp since the constant  $[w]^2_{A_1}$ we get is quadratic if $p>1$ while in the case $p=1$  the constant is linear. However,  the factor  $(1-\delta)^{-2}$ in the case $p=1$ is worse  and we believe that $(1-\delta)^{-1}$ should be the right constant in the inequality, as 
it is in the unweighted case. Actually we have the following weak type result in which the conjectured constant is obtained, although the method of proof gives a higher power of  $[w]_{A_1}$.

\begin{theorem}\label{weightedBBMp=1-weak}
Let  $0<\delta< 1$,  $w\in A_1$ and $n\ge 2$.  Then there exists a constant $C(n)>0$  such that, for any cube $Q$ in $\R^n$ and any $u\in C^1(Q)$, the inequality
\begin{equation*}
\ell(Q)^{\delta} \,\bigg{\|} \frac{u(x)-u(y)}{\lvert x-y\rvert^{n+\delta}} \bigg{\|}_{L^{1,\infty}\big(Q \times Q,  w(x)\,\dif x \times \dif y\big)}
\leq C(n)\,
 \frac{[w]_{A_1}^{ 2+\frac{1-\delta}{n}} }{ \delta(1-\delta)}   
 \ell(Q)  \int_Q |\nabla u(x)|w(x)\,\dif x.
\end{equation*}
holds.
\end{theorem}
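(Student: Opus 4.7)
The plan is to pass from the difference $|u(x)-u(y)|$ to the level sets of a local maximal operator and then to sum over dyadic annuli in the separation $|x-y|$. Set $g:=|\nabla u|$. Since $Q$ is convex and $u\in C^1(Q)$, a chaining through balls contained in $Q$ yields the pointwise bound
\[
|u(x)-u(y)|\le C_n\,|x-y|\bigl(M_Q g(x)+M_Q g(y)\bigr),\qquad x,y\in Q,
\]
where $M_Q$ denotes the centred Hardy--Littlewood maximal operator whose averages are restricted to balls contained in $Q$. Consequently, for every $\lambda>0$, the super-level set $E_\lambda:=\{(x,y)\in Q\times Q:|u(x)-u(y)|>\lambda|x-y|^{n+\delta}\}$ is contained in the union of $\{M_Q g(x)>c_n\lambda|x-y|^{n+\delta-1}\}$ and the analogous set in the $y$-variable.

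Next I would write $r_k:=2^{-k}\ell(Q)$ and $A_k:=\{(x,y)\in Q\times Q:r_{k+1}<|x-y|\le r_k\}$ for $k\ge 0$, and set $\tau_k:=c_n\lambda r_k^{n+\delta-1}$. For the $y$-slice of $E_\lambda\cap A_k$, Fubini combined with the pointwise bound $w(B(y,r_k))\le c_n[w]_{A_1}r_k^n w(y)$, which follows from $Mw\le[w]_{A_1}w$ a.e., gives $(w\otimes\mathrm{d}y)(E_\lambda\cap A_k)\le c_n[w]_{A_1}r_k^n\,w(\{M_Q g>\tau_k\}\cap Q)$; the $x$-slice is treated more simply. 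Applying the weighted weak-type $(1,1)$ inequality $w(\{M_Q g>\tau\})\le c_n[w]_{A_1}\tau^{-1}\int_Q gw$ and summing the geometric series $\sum_{k\ge 0}r_k^{1-\delta}\le c\,\ell(Q)^{1-\delta}/(1-\delta)$, which converges because $1-\delta\in(0,1)$, one obtains after multiplication by $\lambda\ell(Q)^{\delta}$ a weak-type bound of the claimed shape, with a constant of the type $[w]_{A_1}^{2}/(1-\delta)$.

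To arrive at the precise constant $[w]_{A_1}^{2+(1-\delta)/n}/(\delta(1-\delta))$ two additional refinements have to be inserted. The small extra power $[w]_{A_1}^{(1-\delta)/n}$ is most naturally obtained from the sharp reverse H\"older inequality for $A_1$ weights applied to the annular bound on $w(B(y,r_k))$, which improves the estimate slightly at the cost of a fractional power of $[w]_{A_1}$ that becomes negligible as $\delta\to 1^-$. The factor $1/\delta$ enters when the near-diagonal portion of $E_\lambda$ is handled through the representation $|u(x)-u(y)|\le|x-y|\int_0^1 g(x+t(y-x))\,\mathrm{d}t$ together with the change of variables $z=x+t(y-x)$, producing the integrable singularity $\int_0^1 t^{\delta-1}\,\mathrm{d}t=1/\delta$. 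The main obstacle will be to establish the pointwise bound rigorously for $u\in C^1(Q)$ without an external extension---since $M_Q$ uses only balls contained in $Q$, the chain linking $x$ to $y$ must be constructed inside the convex set $Q$---and to balance the two endpoint regimes $\delta\to 0^+$ and $\delta\to 1^-$ so that the denominator $\delta(1-\delta)$ appears without incurring additional powers of $[w]_{A_1}$ beyond those stated.
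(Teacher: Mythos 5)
Your annular decomposition is correct and in fact gives a genuinely different, cleaner route than the one in the paper. The paper applies Lemma \ref{l.loc_impro} with $\beta=\delta<1=\alpha$, producing a pointwise bound with the \emph{fractional} maximal function $M_{1-\delta,Q}$ (and a $1/\delta$ prefactor), so that for fixed $x$ the super-level set in $y$ is exactly a ball; the $[w]_{A_1}^{1+\frac{1-\delta}{n}}/(1-\delta)$ then enters through the $L^1(w)$-boundedness of $M_{1-\delta,Q}$ from Lemma \ref{l.riesz_A1}, and an extra $[w]_{A_1}$ comes from $w(B(y,r))\le c_n[w]_{A_1}r^n w(y)$. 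You instead take $\beta=\alpha=1$ (the ordinary maximal function, no $1/\delta$), split $Q\times Q$ into dyadic annuli $A_k$ in $|x-y|$, use the Fefferman--Stein/$A_1$ weak-$(1,1)$ bound and $Mw\le[w]_{A_1}w$ on each annulus, and sum the geometric series $\sum_k r_k^{1-\delta}\lesssim\ell(Q)^{1-\delta}/(1-\delta)$. Carried to completion this yields the estimate with constant of the form $C(n)[w]_{A_1}^2/(1-\delta)$, which is \emph{stronger} than the statement (smaller power of $[w]_{A_1}$ and no $1/\delta$).

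The final paragraph of your proposal, however, rests on a misconception that you should clear up. A theorem stated as an upper bound is proved a fortiori by any \emph{smaller} constant: there is nothing to ``arrive at.'' The ``refinements'' you suggest (a sharp reverse-H\"older step to manufacture the extra $[w]_{A_1}^{(1-\delta)/n}$, and a change of variables producing $\int_0^1 t^{\delta-1}\,dt=1/\delta$) would only degrade the bound. Moreover the $1/\delta$ you describe comes from the change-of-variables proof of the \emph{strong} estimate \eqref{e.A} (see Lemma \ref{l.fractional_functional} in the appendix, Case~1 with $p=1$); it has nothing to do with the weak-type estimate and would not arise in your annular argument. Also note that the pointwise bound you quote is not an ``obstacle'': it is exactly Lemma \ref{l.loc_impro} with $\beta=\alpha=1$, whose proof chains through nested cubes inside $Q$ and needs no extension outside $Q$. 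So you should simply drop the last paragraph, state the pointwise bound as an application of Lemma \ref{l.loc_impro}, and keep the dyadic-annuli argument, noting that the constant obtained is in fact sharper than the one in Theorem \ref{weightedBBMp=1-weak}.
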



For the unweighted case we refer to the famous paper \cite{BVSY}. We also refer to the related interesting results in \cite{DM1}.
 In any case, a result like the one \cite{B} is not known for us in a different setting than the Euclidean one. It may happen that the right constant in terms of $\delta$ is different from the one in the Euclidean case, namely $(1-\delta)^{-1}$.

\section{Muckenhoupt weights, maximal operators and Riesz potentials} \label{prelimin}

We review here some definitions and known results on the theory of Muckenhoupt weights, the Marcinkiewicz norm, maximal operators and fractional integrals. It will be useful here to denote by $\mathcal{Q}$ the set of all cubes in $\mathbb{R}^n$.

\subsection{Muckenhoupt weights}
We start by recalling some definitions and known results about Muckenhoupt weights. A weight is a function $w\in L^1_{\mathrm{loc}}(\mathbb{R}^n)$ satisfying  $w(x)> 0$  for almost every point  $x\in\mathbb{R}^n$. When $1<p<\infty$, we say that a weight $w\in L^1_{\mathrm{loc}}(\mathbb{R}^n)$  is in the $A_p$ class if its $A_p$ constant, 
\[
[w]_{A_p} = \sup_{Q\in \mathcal{Q}} \vint_Q w(x)\,\dif x\left(\vint_Q w(x)^{1-p'}\dif x\right)^{p-1},
\]
is finite. In case $p=1$, we say that $w\in A_1$ if there is a constant $C>0$ such that, for any cube $Q$ in $\mathbb{R}^n$,
\[
\frac{1}{|Q|} \int_Q w(x)\,\dif x\leq C\,\mathrm{ess\ inf}_{x\in Q}w(x),
\]
and the $A_1$ constant $[w]_{A_1}$ is defined as the smallest of these constants $C$.

Given $1\leq p<\infty$, it turns out (see \cite[p. 396]{GCRdF}) that a weight $w\in L^1_{\mathrm{loc}}(\mathbb{R}^n)$ is in the $A_p$ class if and only if there is a constant $C>0$ such that, for every cube $Q$ in $\mathbb{R}^n$ and every nonnegative measurable function $u$ on $Q$, the inequality
\begin{equation}\label{eq:prop-Ap-function}
\frac{1}{|Q|}\int_Q u(x)\ \dif x\leq C\left( \frac{1}{w(Q)}\int_Q u(x)^pw(x)\ \dif x\right)^{\frac{1}{p}}
\end{equation}
holds. Here, we denote the 
weighted measure of a measurable set $E$ by $w(E)=\int_Ew(x)\,\dif x$.  Moreover, the smallest constant $C$ satisfying the above inequality is precisely $[w]_{A_p}^\frac{1}{p}$.
Given an $A_p$ weight, a cube $Q\in \mathcal{Q}$ and a measurable subset $E\subset Q$, one can apply inequality \eqref{eq:prop-Ap-function} to the function $u= \chi_{E}$ to get
\begin{equation}\label{eq:prop-Ap-set}
\frac{|E|}{w(E)^{\frac1p}}\leq[w]^{\frac{1}{p}}_{A_p} \frac{|Q|}{w(Q)^{\frac1p}}.
\end{equation}

\subsection{Marcinkiewicz norms and Kolmogorov's inequality}

Similarly, we will use the standard notation for the normalized  Marcinkiewicz quasinorms: for any $0<q<\infty$, any measurable set $E\subset \R^n$ and a Borel measure $\mu$, we define
%
\begin{equation*}
\| u \|_{L^{q,\infty}\big(E, \frac{\dif \mu}{\mu(E)}\big)} =  \sup_{t>0}  t \, \left( \frac{1}{\mu(E)}\mu(\{x\in E:  |u(x)|>t\}) \right)^{\frac1q}.
\end{equation*}

We shall use the following Kolmogorov's inequality: given a Borel measure $\mu$, we have that, for every $0<q<r<\infty$ and every nonnegative measurable function $u$ on a cube $Q$,  
\begin{equation}\label{kolmogorov}
\frac{1}{\mu(Q)} \int_{Q} u(x)^{q}  \dif \mu(x)\leq 
\frac{r}{r-q}\, \| u \|^q_{L^{r,\infty}\big(Q, \frac{\dif \mu}{\mu(Q)}\big)} %
\end{equation}
See \cite[p. 485]{GCRdF}, for instance.

\subsection{Maximal functions and Riesz potentials}

Let $\mu$ be a Borel measure in $\R^n$. Let $\alpha\geq 0$. We will denote by   $M^c_\alpha\mu$ the fractional centered Hardy--Littlewood maximal function of $\mu$ on cubes, which is defined by 
\begin{equation}\label{eq:weighted-maximal-function}
M^c_\alpha\mu(x):=\sup_{\ell>0}\ell^\alpha\,\frac{\mu(Q(x,\ell))}{|Q(x,\ell)|},\qquad x\in\R^n,
\end{equation}
where $Q(x,\ell)$ is the cube of side length $\ell$ centered at $x$ and the supremum is taken over all $\ell>0$. The case $\alpha=0$ corresponds to the usual centered Hardy--Littlewood maximal function, which we simply denote by $M^c\mu$. We remove the superscript $c$ from the notation when the supremum is taken over all cubes $Q$ in $\mathbb{R}^n$ satisfying $x\in Q$. This allows to define the fractional and classical non-centered Hardy--Littlewood maximal functions $M_\alpha\mu$ and $M\mu$, respectively, by
\begin{equation}\label{eq:weighted-maximal-function-non-centered}
M_\alpha\mu(x):=\sup_{\mathcal{Q}\ni Q\ni x}\ell(Q)^\alpha\,\frac{\mu(Q)}{|Q|},\qquad x\in\R^n.
\end{equation}
When a cube $Q_0$ is given, we define  $M_{\alpha,Q_0}\mu$ by replacing the supremum in \eqref{eq:weighted-maximal-function-non-centered}  by the supremum over all cubes $Q$ in the class $\mathcal{Q}(Q_0)$ of all cubes  $Q\in\mathcal{Q}$ satisfying $Q\subset Q_0$. When $\mu$ is given by the integral of the absolute value of a function $u\in  L^1_{\mathrm{loc}}(\mathbb{R}^n)$, that is, $\dif \mu=\lvert u\rvert\,\dif x$, we replace $\mu$ by $u$ in the notation.  We also define, for a given weight $w\in L^1_{\mathrm{loc}}(\mathbb{R}^n)$, the centered weighted maximal function $M_w^c u$ of $u\in  L^1_{\mathrm{loc}}(\mathbb{R}^n)$ by
\begin{equation}\label{eq:weighted-maximal-function-non-centered-weighted}
M_w^c u(x):=\sup_{\ell >0}\frac{1}{w((Q(x,\ell)))}\,\int_{Q(x,\ell)} |u(y)|\,\dif w(y),\qquad x\in\R^n.
\end{equation}
We will use the classical weighted Fefferman--Stein inequality \cite{FS}
for a Borel measure $\mu$ and $u\in L^1_{\loc}(\R^n)$,
\begin{equation}\label{weightedweaktype}
\|M u\|_{\strt{2ex}L^{1,\infty}(d\mu)} \le C(n) \,
 \int_{\R^{n}} |u(x)|\,M\mu(x)\,\dif x.
\end{equation}
The usual proof for weights, namely when $d\mu=wdx$, goes through using a covering lemma of Vitali type for instance.  
From \eqref{weightedweaktype} we deduce, using Marcinkiewicz interpolation,  that
\begin{equation}\label{FSHLStrong}
\|Mu\|_{\strt{2ex}L^{p}(d\mu)}    \leq C(n)\,p'\, \|u\|_{\strt{2ex}L^{p}(M\mu)}
\end{equation}
holds for every $p\in (1,\infty)$.

Another family of operators which we shall be using and which are closely related to the fractional maximal functions  are the fractional integral operators or Riesz potentials which, for $0<\alpha<n$, are defined for a Borel measure $\mu$ in $\R^n$ by
\[
I_{\alpha} \mu(x)=\int_{\R^n} \frac{  \dif \mu(y) }{\lvert x-y\rvert^{n-\alpha}}\,,\qquad x\in\R^n.
\]
Whenever the measure $\mu$ is given by a nonnegative function $u\in L^1_{\mathrm{loc}}(\mathbb{R}^n)$, that is, $\dif \mu(x)=u(x)\,\dif x$, we get the usual fractional integral operator.

\section{Estimates for operators and representation formulas}\label{s.toolbox}

In this section we establish some basic results which will be used in the sequel. We first prove several estimates involving fractional maximal functions and Riesz potentials. Then we prove some representation formulas relating the oscillations of the functions under study with the aforementioned operators.

\subsection{Estimates for fractional maximal functions and Riesz potentials}\label{ss.toolbox1}

We start with the following  boundedness result for
the local fractional maximal operator. 

 \begin{lemma}\label{l.riesz_A1}
 There exists a constant $C(n)$ such that, for any $0<\alpha<n$, any $w\in A_1$ and any $u$ nonnegative measurable function on a cube $Q_0\in\mathcal{Q}$,
\[
 \int_{Q_0} M_{\alpha,Q_0}u(x) \,w(x)\,\dif x\leq   
\,\frac{C(n)}{\alpha}\, [w]_{A_1}^{ 1+\frac{\alpha}{n}} \ell(Q_0)^\alpha   \int_{Q_0} u(x) \,w(x)\,\dif x.
\]
\end{lemma}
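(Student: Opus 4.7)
The plan is to dominate $M_{\alpha,Q_0}u$ pointwise by a truncated Riesz potential, then transfer the weighted integral onto $w$ via Fubini, and finally estimate the resulting truncated Riesz potential of $w$ using the $A_1$ hypothesis in an annular decomposition. More specifically, for $x\in Q \subset Q_0$ and $y\in Q$ one has $|x-y|\leq \sqrt{n}\,\ell(Q)$, so
\[
\ell(Q)^\alpha\,\frac{1}{|Q|}\int_Q u(y)\,\dif y
 = \ell(Q)^{\alpha-n}\int_Q u(y)\,\dif y
 \leq (\sqrt{n})^{n-\alpha}\int_Q \frac{u(y)}{|x-y|^{n-\alpha}}\,\dif y,
\]
and taking the supremum over admissible cubes yields the pointwise bound $M_{\alpha,Q_0}u(x)\leq c_n \int_{Q_0} u(y)|x-y|^{\alpha-n}\,\dif y$ for $x\in Q_0$.

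The next step is to multiply by $w(x)$, integrate, and apply Fubini, which rewrites the left-hand side as $c_n\int_{Q_0} u(y)\,K(y)\,\dif y$, where $K(y):=\int_{Q_0} w(x)|x-y|^{\alpha-n}\,\dif x$ is a truncated Riesz potential of $w$. What I then need is a pointwise estimate
\[
K(y) \leq \frac{C(n)}{\alpha}\,[w]_{A_1}\,\ell(Q_0)^\alpha\, w(y) \quad \text{for a.e. } y\in Q_0.
\]
This is the heart of the argument and I expect it to be the main technical step. To prove it, I would set $R=\sqrt{n}\,\ell(Q_0)$ (so that $Q_0 \subset B(y,R)$ for $y\in Q_0$), split $B(y,R)$ into dyadic annuli $A_k = \{R/2^{k+1}\leq |x-y|<R/2^k\}$ for $k\geq 0$, and on each annulus use $|x-y|^{\alpha-n}\leq (R/2^{k+1})^{\alpha-n}$ together with the $A_1$-consequence $Mw(y) \leq [w]_{A_1}w(y)$ a.e., which gives $w(B(y,R/2^k))\leq c_n (R/2^k)^n [w]_{A_1}\,w(y)$. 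Summing yields a geometric series $\sum_{k\geq 0} 2^{-k\alpha}=(1-2^{-\alpha})^{-1}\leq C/\alpha$, producing the $1/\alpha$ factor.

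Combining steps, I would obtain
\[
\int_{Q_0} M_{\alpha,Q_0}u(x)\,w(x)\,\dif x \leq \frac{C(n)}{\alpha}\,[w]_{A_1}\,\ell(Q_0)^\alpha\int_{Q_0} u(y)\,w(y)\,\dif y,
\]
which is in fact stronger than the stated bound, since $[w]_{A_1}\geq 1$ implies $[w]_{A_1}\leq [w]_{A_1}^{1+\alpha/n}$. The only real subtlety is the dyadic summation in the annular bound, where care is needed to guarantee the correct blow-up $1/\alpha$ as $\alpha\to 0^+$; the pointwise domination by the Riesz potential and the Fubini step are routine.
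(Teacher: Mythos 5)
Your proof is correct and takes a genuinely different route from the paper's. The paper first proves a pointwise bound of the shape
\[
M_{\alpha,Q_0}u(x) \leq c_n[w]_{A_1}^{1+\alpha/n}\,\ell(Q_0)^{\alpha}\left(\frac{1}{w(Q_0)}\int_{Q_0}u\,\dif w\right)^{\alpha/n}\bigl(M^c_w(\chi_{Q_0}u)(x)\bigr)^{1-\alpha/n}
\]
using the $A_p$-characterization \eqref{eq:prop-Ap-function}--\eqref{eq:prop-Ap-set}, and then integrates via Kolmogorov's inequality \eqref{kolmogorov} together with the weak $(1,1)$ bound for the weighted centered maximal operator $M^c_w$. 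You instead dominate $M_{\alpha,Q_0}u$ pointwise by the truncated Riesz potential $I_\alpha(u\chi_{Q_0})$, apply Fubini to move the weight onto the kernel, and bound $I_\alpha(w\chi_{Q_0})(y)\leq\frac{C(n)}{\alpha}[w]_{A_1}\ell(Q_0)^\alpha w(y)$ a.e.\ by dyadic annuli plus the $A_1$-pointwise property $Mw\leq[w]_{A_1}w$ a.e. This sidesteps $M^c_w$ and Kolmogorov entirely, and it does yield the sharper dependence $[w]_{A_1}$ rather than $[w]_{A_1}^{1+\alpha/n}$; carried through the proof of Theorem~\ref{weightedBBMp=1-weak}, the exponent $2+\frac{1-\delta}{n}$ on $[w]_{A_1}$ would drop to $2$. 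Your Riesz-potential bound on $w$ is essentially inequality \eqref{lem-eq2} of Lemma~\ref{New-pointwise-B} (applied with $\dif\mu=w\,\dif x$, and proved there via the layer-cake formula rather than annuli) followed by $Mw\leq[w]_{A_1}w$. One bookkeeping remark: the constant $(\sqrt{n})^{n-\alpha}$ in the domination $M_{\alpha,Q_0}u\leq(\sqrt{n})^{n-\alpha}I_\alpha(u\chi_{Q_0})$ is $\alpha$-dependent, but since $0<\alpha<n$ it is uniformly bounded by $n^{n/2}$, so it is safely absorbed into $C(n)$.
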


\begin{proof}
	Let $x\in Q_0$. We first show that for every $x\in Q_0$,
\begin{equation}\label{e.apu}
M_{\alpha,Q_0}u(x) \leq c_n\,[w]_{A_1}^{ 1+\frac{\alpha}{n} }\, \ell(Q_0)^{\alpha}  \left(  \frac{1}{w(Q_0)} \int_{Q_0} u(y) \,w(y)\dif y\right)^{\frac{\alpha}{n}}\,
 \left(M^c_{w} (\chi_{Q_0}u)(x) \right)^{1-\frac{\alpha}{n}}. 
\end{equation}
Indeed, let $\theta=\frac{\alpha}{n}\in (0,1)$.
Let $Q\subset Q_0$ be a cube with $x\in Q$. 
By using inequality \eqref{eq:prop-Ap-function}, we obtain
\[
\begin{split}
&\frac{\ell(Q)^\alpha}{|Q|}  \int_{Q} u(y) \,\dif y= 
|Q|^{\frac{\alpha}{n}}\,  \left( \frac{1}{|Q|} \int_{Q} u(y) \,\dif y\right)^{\theta} 
\left( \frac{1}{|Q|} \int_{Q} u(y) \,\dif y\right)^{1-\theta}\\
&\qquad \leq \,|Q|^{\frac{\alpha}{n}}\,
\left(\frac{ [w]_{A_1}}{w(Q)}  \int_{Q} u(y) \,w(y)\dif y\right)^{\theta} 
\left( \frac{C(n)}{|Q(x,2\ell(Q))|} \int_{Q(x,2\ell(Q))} \chi_{Q_0}(y)u(y) \,\dif y\right)^{1-\theta}\\
&\qquad \leq [w]_{A_1}^{\theta}\,|Q|^{\frac{\alpha}{n}}\,
\left(\frac{1}{w(Q)}  \int_{Q} u(y) \,\dif w(y) \right)^{\theta} 
\left( \frac{C(n)[w]_{A_1}}{w(Q(x,2\ell(Q)))} \int_{Q(x,2\ell(Q))} \chi_{Q_0}(y)u(y) \,\dif w(y)\right)^{1-\theta}\\
&\qquad \leq C(n)
[w]_{A_1} \left(  \frac{|Q|}{w(Q)}\right)^{\frac{\alpha}{n}}\,\,  \left(\int_{Q_0} u(y) \,\dif w(y) \right)^{\frac{\alpha}{n}}\, 
 \left(M^c_{w} (\chi_{Q_0}u)(x) \right)^{1-\frac{\alpha}{n}}.
\end{split}
\]
Now we can apply \eqref{eq:prop-Ap-set} to get
\[
\begin{split}	
	\frac{\ell(Q)^\alpha}{|Q|}  \int_{Q} u(y) \,\dif y &\leq C(n)
	[w]_{A_1} \left(  \frac{|Q|}{w(Q)}\right)^{\frac{\alpha}{n}}\,\,  \left(\int_{Q_0} u(y) \,\dif w(y) \right)^{\frac{\alpha}{n}}\, 
	 \left(M^c_{w} (\chi_{Q_0}u)(x) \right)^{1-\frac{\alpha}{n}}\\
	 &  \le 
	C(n)
	[w]_{A_1}  \left( [w]_{A_1} \frac{|Q_0|}{w(Q_0)}\right)^{\frac{\alpha}{n}}  \,\,  	\left(\int_{Q_0} u(y) \,\dif w(y) \right)^{\frac{\alpha}{n}}\, 
	\left(M^c_{w} (\chi_{Q_0}u)(x) \right)^{1-\frac{\alpha}{n}}\\
	&\le 
	 C(n)
	[w]_{A_1}^{1+\frac{\alpha}{n}}  \ell(Q_0)^\alpha \,  \left(\frac{1}{w(Q_0)}\int_	{Q_0} u(y) \,\dif w(y) \right)^{\frac{\alpha}{n}}\, 
	\left(M^c_{w} (\chi_{Q_0}u)(x) \right)^{1-\frac{\alpha}{n}}. 
\end{split}
\]

By taking supremum over
all cubes $Q\in \mathcal{Q}(Q_0)$ with $x\in Q$, we get inequality \eqref{e.apu}. 

Now we can apply inequality \eqref{e.apu} to obtain
\begin{align*}
&\frac{1}{w(Q_0)} \int_{Q_0} M_{\alpha,Q_0}u(x) \,\dif w(x) \\
&\leq c_n[w]_{A_1}^{1+\frac{\alpha}{n}}\, \ell(Q_0)^{\alpha}  
\left(\frac{1}{w(Q_0)} \int_{Q_0} u(x) \,\dif w(x) \right)^{\frac{\alpha}{n}}\,
\frac{1}{w(Q_0)} \int_{Q_0} \, \left(M^c_{w} (\chi_{Q_0}u)(x) \right)^{1-\frac{\alpha}{n}}\,\dif w(x).
\end{align*}

Since $M^c_w$ is of weak type $(1,1)$ with respect to $\dif w(x)$ and with norm depending just on $n$ by the Besicovitch covering lemma, we can apply Kolmogorov's inequality  \eqref{kolmogorov} to bound the last integral from above as follows
\[
\begin{split}
\frac{1}{w(Q_0)} \int_{Q_0} \, \left(M^c_{w} (\chi_{Q_0}u)(x) \right)^{1-\frac{\alpha}{n}}\,w(x)\,\dif x&\leq \frac{n}{\alpha}\, \| M^c_{w} (\chi_{Q_0}u)\|^{1-\frac{\alpha}{n}}_{L^{1,\infty}\big(Q_0, \frac{\dif w}{w(Q_0)}\big)}\\
& \leq \frac{C(n)}{\alpha}\, \left(\frac{1}{w(Q_0)} \int_{Q_0} u(x) \,w(x)\,\dif x\right)^{1-\frac{\alpha}{n}},
\end{split}
\]
By combining the above estimates, we get
$$
\frac{1}{w(Q_0)} \int_{Q_0} M_{\alpha,Q_0}u(x) \,w(x)\,\dif x\leq \frac{C(n)}{\alpha}\, [w]_{A_1}^{1+\frac{\alpha}{n}}\,
\frac{ \ell(Q_0)^{\alpha}  }{w(Q_0)} \int_{Q_0} u(x) \,w(x)\,\dif x
$$
which is the result we wanted to prove.
\end{proof}

The following lemma will be very useful. 
Inequality \eqref{lem-eq1} is probably known but we provide a proof for convenience of the reader. 
This  can also be found in \cite{GLP}.
Inequality \eqref{lem-eq2} is well-known and follows from  \cite{He}. 

\begin{lemma}\label{New-pointwise-B}
	Let $Q_0$ be a cube in $\R^n$, $\mu$ be a Borel measure, and $0<\alpha<n$. Then there
	is a constant $C(n)>0$ such that   the inequality
	\begin{equation} \label{lem-eq1}
	I_\alpha(\chi_{Q_0}\mu)(x) \leq \frac{C(n)}{\alpha} \mu(Q_0)^\frac{\alpha}{n}M^c( \chi_{Q_0}\mu)(x)^\frac{n-\alpha}{n} 
	\end{equation}
	holds  for every  $x\in \R^n$.  	
	 Furthermore, the inequality 
	\begin{equation} \label{lem-eq2}
	 I_\alpha(\chi_{Q_0}\mu)(x) \leq \frac{C(n)}{\alpha}  \ell(Q_0)^\alpha M(\chi_{Q_0}\mu)(x), 
	\end{equation}
	holds
	for every $x\in Q_0$,.
	%
	If $w\in A_1$, then the inequality
	\begin{equation} \label{lem-eq3}
	 I_\alpha(\chi_{Q_0}w)(x) 
	\leq  \frac{C(n)}{\alpha}[w]_{A_1}^\frac{n-\alpha}{n}w(Q_0)^\frac{\alpha}{n}w(x)^\frac{n-\alpha}{n}	
	\end{equation}
	holds  for almost every   $x\in \R^n$.

\end{lemma}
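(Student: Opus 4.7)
All three inequalities follow from variants of Hedberg's classical splitting trick, so the plan is to establish \eqref{lem-eq1} first and then obtain \eqref{lem-eq3} as a direct corollary, while \eqref{lem-eq2} is handled by an independent dyadic annular decomposition exploiting the constraint $x\in Q_0$.

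For \eqref{lem-eq1}, set $\mu_0=\chi_{Q_0}\mu$ and split
\[
I_\alpha \mu_0(x)=\int_{|x-y|<r}\frac{\dif\mu_0(y)}{|x-y|^{n-\alpha}}+\int_{|x-y|\ge r}\frac{\dif\mu_0(y)}{|x-y|^{n-\alpha}}.
\]
The near part is estimated by decomposing into dyadic annuli $\{2^{-k-1}r\le|x-y|<2^{-k}r\}$, bounding $\mu_0(B(x,2^{-k}r))\lesssim_n (2^{-k}r)^n M^c\mu_0(x)$ (after enlarging each ball to a cube), and summing the resulting geometric series, whose sum is $\sum_{k\ge 0}2^{-k\alpha}\le C/\alpha$. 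This gives a near-part bound of $(C(n)/\alpha)\,r^\alpha M^c\mu_0(x)$. The far part is controlled trivially by $r^{\alpha-n}\mu(Q_0)$. Optimizing by choosing $r$ so that $r^n=\mu(Q_0)/M^c\mu_0(x)$ balances the two terms produces exactly the claimed bound $(C(n)/\alpha)\mu(Q_0)^{\alpha/n}M^c\mu_0(x)^{(n-\alpha)/n}$.

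For \eqref{lem-eq2}, since $x\in Q_0$ one has $Q_0\subset B(x,\sqrt n\,\ell(Q_0))$, so one can bypass the far part entirely. Decompose the domain of integration into the dyadic annuli $A_k=\{2^{-k-1}\sqrt n\,\ell(Q_0)\le |x-y|<2^{-k}\sqrt n\,\ell(Q_0)\}$ for $k\ge 0$, and estimate
\[
\int_{A_k\cap Q_0}\frac{\dif\mu_0(y)}{|x-y|^{n-\alpha}}\lesssim_n (2^{-k}\ell(Q_0))^{\alpha-n}\mu_0\bigl(Q(x,2\cdot 2^{-k}\sqrt n\,\ell(Q_0))\bigr)\lesssim_n (2^{-k}\ell(Q_0))^{\alpha}M\mu_0(x).
\]
Summing the geometric series $\sum_{k\ge 0}2^{-k\alpha}\le C/\alpha$ then yields \eqref{lem-eq2}; this is essentially Hedberg's argument from \cite{He}.

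For \eqref{lem-eq3}, simply apply \eqref{lem-eq1} with $\dif\mu=w\,\dif x$ and use the defining property of the $A_1$ class, namely $M^c(\chi_{Q_0}w)(x)\le Mw(x)\le [w]_{A_1}w(x)$ for almost every $x\in\R^n$. Raising this to the power $(n-\alpha)/n$ and multiplying by $w(Q_0)^{\alpha/n}$ gives the stated estimate. The only mildly delicate point in the whole lemma is keeping track of the $1/\alpha$ behaviour as $\alpha\to 0^+$, which is what forces the explicit treatment of the geometric sums above; everything else is standard annular bookkeeping.
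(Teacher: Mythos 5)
Your proof is correct, and on the first two inequalities it follows a genuinely different route from the paper. For \eqref{lem-eq1} you use Hedberg's classical near/far splitting with a dyadic annular sum and an explicit optimization over the splitting radius $r$, while the paper obtains the same pointwise bound via the layer-cake (distribution-function) formula applied to $|x-y|^{-(n-\alpha)}$, splitting the $t$-integral at $t_0=(M^c(\chi_{Q_0}\mu)(x)/\mu(Q_0))^{(n-\alpha)/n}$; both give the factor $1/\alpha$ from a geometric sum or from the tail $\int t^{-n/(n-\alpha)}\dif t$, and both yield the same dimensional constant structure. For \eqref{lem-eq2} you give an independent, self-contained dyadic-annulus argument that exploits $Q_0\subset B(x,\sqrt n\,\ell(Q_0))$ for $x\in Q_0$ and sums $\sum_{k\ge 0}2^{-k\alpha}\le C/\alpha$; the paper instead deduces \eqref{lem-eq2} directly from \eqref{lem-eq1} by writing $\mu(Q_0)^{\alpha/n}=|Q_0|^{\alpha/n}(\mu(Q_0)/|Q_0|)^{\alpha/n}$ and absorbing the density ratio into $M(\chi_{Q_0}\mu)(x)^{\alpha/n}$ (valid since $x\in Q_0$) together with $M^c\le M$, which is a shorter purely algebraic deduction with no second annular sum. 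For \eqref{lem-eq3} both proofs are identical: apply \eqref{lem-eq1} with $\dif\mu=w\,\dif x$ and use $M^c(\chi_{Q_0}w)\le Mw\le [w]_{A_1}w$ almost everywhere. Your version is a bit more self-contained, the paper's a bit more economical; the $\alpha\to 0^+$ behaviour of the constant is tracked correctly in both.
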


\begin{proof} 
	For $x\in\R^n$ and  $t>0$  we denote
	$$Q_{x,t}=Q\pa{x, 2 t^{-\frac{1}{n-\alpha}}}.$$
	By the layer-cake formula, we obtain
	\begin{align*}
	\int_{Q_0}\frac{\dif \mu(y)}{\abs{x-y}^{n-\alpha}}&= \int_0^{\infty} \mu\pa{\set{y\in Q_0}{ \frac{1}{\abs{x-y}^{n-\alpha}}>t}}\dif t\\
	&= \int_0^{\infty} \mu\pa{\set{y\in Q_0}{\abs{x-y}<t^{-\frac{1}{n-\alpha}}}}\dif t\\
	&\leq \int_0^{\infty} \min\left\{\mu(Q_0), \frac{\mu( Q_0\cap Q_{x,t})}{|Q_{x,t}|}\,|Q_{x,t}|\right\} \dif t\\
	&\leq  \int_0^{\infty}\min\left\{ \mu(Q_0),M^c(\chi_{Q_0}\mu)(x)\pa{ 2 t^{-\frac{1}{n-\alpha}}}^n\right\}\dif t\\
	&\le  \int_0^{\pa{\frac{M^c(\chi_{Q_0}\mu)(x)}{\mu(Q_0)}}^\frac{n-\alpha}{n}}\mu(Q_0)\,\dif t
	+C(n)\int_{\pa{\frac{M^c(\chi_{Q_0}\mu)(x)}{\mu(Q_0)}}^\frac{n-\alpha}{n}}^\infty M^c(\chi_{Q_0}\mu)(x)t^{-\frac{n}{n-\alpha}}\,\dif t\\
	&\le  \frac{C(n)}{\alpha}\mu(Q_0)^\frac{\alpha}{n}M^c(\chi_{Q_0}\mu)(x)^\frac{n-\alpha}{n},
	\end{align*}
	which is inequality \eqref{lem-eq1}. Observe that inequality \eqref{lem-eq3} is a direct consequence of \eqref{lem-eq1}. 
	
	To show \eqref{lem-eq2}, we use inequality \eqref{lem-eq1} with the measure $\chi_{Q_0}\mu$ to get
	\[
	  \begin{split}
		I_\alpha(\chi_{\strt{1.7ex} Q_0} \mu)(x) & \leq \frac{C(n)}{\alpha} \mu(Q_0)^\frac{\alpha}{n}M^c( \chi_{\strt{1.7ex} Q_0} \mu)(x)^\frac{n-\alpha}{n}\\
		& = \frac{C(n)}{\alpha} |Q_0|^\frac{\alpha}{n}\left(\frac{\mu(Q_0)}{|Q_0|}\right)^\frac{\alpha}{n}M^c(\chi_{ \strt{1.7ex} Q_0 } \mu)(x)^\frac{n-\alpha}{n}\\
		& \leq \frac{C(n)}{\alpha} |Q_0|^\frac{\alpha}{n} M ( \chi_{\strt{1.7ex} Q_0} \mu)(x) ^\frac{\alpha}{n}M^c(\chi_{\strt{1.7ex} Q_0}\mu)(x)^\frac{n-\alpha}{n}\\
		& = \frac{C(n)}{\alpha} \ell(Q_0)^\alpha M (\chi_{\strt{1.7ex} Q_0}\mu)(x)
	  \end{split}  
	\]
	for every $x\in Q_0$, since $M^c(\chi_{Q_0}\mu)(x)\le M(\chi_{Q_0}\mu)(x)$. This is the desired inequality \eqref{lem-eq2}.
	\end{proof}

\subsection{Representation formulas from Poincar\'e-type inequalities}

We present some local representation formulas which follow from general Poincar\'e-type inequalities. Then we will 
 introduce some consequences of these representation formulas.
The following lemma is a key  in our arguments. It essentially follows from 
  \cite{FLW} or  \cite{Ha}, but it can also be obtained by following the proof of \cite[Lemma 4.10]{HV}, since cubes are examples of John domains.  We are interested in the tracking of the constants involved in our estimates and so   we will provide the proof here for the sake of clarity.  

\begin{lemma}\label{l.loc_impro} 
Assume that $0<\beta\le \alpha<n$. Let $Q_0$ be a cube in $\R^n$. Suppose there exists a constant $\kappa>0$, a function $u\in L^1(Q_0)$ and a nonnegative measurable function $g$ such that
\begin{align*}
&\vint_{Q}\lvert u(x)-u_{Q}\rvert\,\dif x
 \leq  {\kappa}\, \ell(Q)^{\alpha}\vint_Q g(x)\,\dif x
\end{align*}
for every cube $Q\subset Q_0$.
Then there exists a dimensional constant $C(n)>0$  such that
\[
\lvert u(x)-u(y)\rvert
\leq C(n)\, \frac{ \kappa}{\beta}  \,\lvert x-y\rvert^{\beta}
\bigl(M_{\alpha-\beta,Q_0}(g\chi_{Q_0})(x)+M_{\alpha-\beta,Q_0}(g\chi_{Q_0})(y)\bigr)
\]
for every pair $x,y\in Q_0$ of Lebesgue points  of $u$.
\end{lemma}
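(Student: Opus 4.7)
The plan is to derive the pointwise bound from a telescoping chain of shrinking cubes, summing a geometric series in scales to produce both the power $|x-y|^{\beta}$ and the $1/\beta$ factor. Fix Lebesgue points $x,y\in Q_0$ of $u$ and choose a cube $Q_{xy}\subset Q_0$ containing both points with $\ell(Q_{xy})\le C(n)|x-y|$; by the triangle inequality,
\[
|u(x)-u(y)|\le |u(x)-u_{Q_{xy}}|+|u_{Q_{xy}}-u(y)|,
\]
so by symmetry it suffices to bound the first term.

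I would construct a chain $Q_{xy}=R_0\supset R_1\supset R_2\supset\cdots$ of nested sub-cubes all containing $x$ with $\ell(R_{k+1})=\tfrac{1}{2}\ell(R_k)$, e.g.\ by partitioning $R_k$ into $2^n$ congruent children at each step and taking the one that contains $x$. Since $x$ is a Lebesgue point, $u_{R_k}\to u(x)$, and telescoping together with the routine bound $|u_{R_{k+1}}-u_{R_k}|\le 2^n\vint_{R_k}|u-u_{R_k}|\,\dif z$ (obtained by inserting $u-u_{R_k}$ into the average over $R_{k+1}$ and losing only the volume ratio $|R_k|/|R_{k+1}|=2^n$) gives
\[
|u(x)-u_{Q_{xy}}|\le 2^n\sum_{k=0}^{\infty}\vint_{R_k}|u-u_{R_k}|\,\dif z\le 2^n\kappa\sum_{k=0}^{\infty}\ell(R_k)^{\alpha}\vint_{R_k}g\,\dif z,
\]
where the last inequality is the hypothesis applied to each $R_k\subset Q_0$.

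The last step is to split $\ell(R_k)^{\alpha}=\ell(R_k)^{\beta}\,\ell(R_k)^{\alpha-\beta}$ and to use that $R_k\in\mathcal{Q}(Q_0)$ contains $x$, which absorbs the second factor into the fractional maximal operator:
\[
\ell(R_k)^{\alpha-\beta}\vint_{R_k}g\,\dif z\le M_{\alpha-\beta,Q_0}(g\chi_{Q_0})(x).
\]
Summing the geometric series $\sum_{k\ge 0}\ell(R_k)^{\beta}=\ell(Q_{xy})^{\beta}/(1-2^{-\beta})$ and invoking the elementary estimate $1-2^{-\beta}\ge c(n)\beta$ for $\beta\in(0,n)$, together with $\ell(Q_{xy})\le C(n)|x-y|$, yields
\[
|u(x)-u_{Q_{xy}}|\le \frac{C(n)\,\kappa}{\beta}|x-y|^{\beta}M_{\alpha-\beta,Q_0}(g\chi_{Q_0})(x).
\]
Repeating the argument with a chain shrinking to $y$ gives the mirror bound, and the two added together give the claim. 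The only point that takes any care is the uniformity of the comparison factor between consecutive cubes in the chain and the $1/\beta$ bound; both are routine, the first handled by the explicit dyadic construction (so no appeal to general John-domain machinery is needed here) and the second by the concavity of $\beta\mapsto 1-2^{-\beta}$ on $(0,n)$.
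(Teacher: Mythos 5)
Your proof is correct and takes essentially the same route as the paper's: the same choice of a containing cube $R_0\subset Q_0$ of side $\lesssim |x-y|$, the same dyadic chain shrinking to a Lebesgue point, the same telescoping estimate with the volume ratio $2^n$, the same splitting $\ell(R_k)^{\alpha}=\ell(R_k)^{\beta}\ell(R_k)^{\alpha-\beta}$ feeding into $M_{\alpha-\beta,Q_0}$, and the same geometric-series bound $\bigl(1-2^{-\beta}\bigr)^{-1}\lesssim \beta^{-1}$. The only cosmetic differences are that you allow $\ell(Q_{xy})\le C(n)|x-y|$ where the paper takes $\ell(R_0)\le |x-y|$, and you justify $1-2^{-\beta}\gtrsim_n \beta$ via concavity, which is a clean way to make the paper's ``comparable as $\beta\to 0$'' remark precise on all of $(0,n)$.
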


\begin{proof}
 
Pick two Lebesgue points $x,y\in Q_0$ of $u$ and
let $R_0\subset Q_0$ be a closed cube such that $\ell(R_0)\le \lvert x-y\rvert$ and $x,y\in R_0$.
 For every $j\in\mathbb{N}$ there
exists a cube $R_j\subset R_{j-1}$ such that $x\in R_j$ and $\ell(R_j)=2^{-1}\ell(R_{j-1})$.
Since $x$ is a Lebesgue point of $u$, we can bound the oscillation of $u$ over $R_0$ by a telescopical sum of the distances between averages of $u$ over the cubes $R_j$, and then  use the Poincar\'e-type inequality as follows:
\begin{align*}
\lvert u(x)-u_{R_0}\rvert
&\le\sum_{i=0}^\infty\lvert u_{R_{i+1}}-u_{R_i}\rvert
\\
&\le\sum_{i=0}^\infty\frac{\lvert R_i\rvert}{\lvert R_{i+1}\rvert}
\vint_{R_i}\lvert u(z)-u_{R_i}\rvert\,\dif  z
\\
&\le  C(n)\,\kappa\,\sum_{i=0}^{\infty}(2^{-i}\ell(R_0))^{\beta}(2^{-i}\ell(R_0))^{\alpha-\beta}
\vint_{R_{i}} g(z)\,\dif  z
\\
&\le C(n)\frac{ {\,\kappa}\,}{1-2^{-\beta}}\ell(R_0)^\beta M_{\alpha-\beta,Q_0} (g\chi_{Q_0})(x).
\end{align*}

Repeating the argument with $y$ yields the estimate
\begin{align*}
\lvert u(x)-u(y)\rvert
&\le\lvert u(x)-u_{R_0}\rvert
+\lvert u(y)-u_{R_0}\rvert
\\
&\le C(n)\frac{ { \,\kappa}\,}{1-2^{-\beta}}\lvert x-y\rvert^\beta
\bigl(M_{\alpha-\beta,Q_0} (g\chi_{Q_0})(x)+M_{\alpha-\beta,Q_0} (g\chi_{Q_0})(y)\bigr),
\end{align*}
and this completes the proof, since $1-2^{-\beta}$ is comparable to $\beta$ as $\beta\to0$.
\end{proof}

\begin{lemma}\label{l.riesz_poinc}
Let $Q_0$ be a cube in $\R^n$.
Assume that  $0<\alpha< n$ and  consider $0<\eta <n-\alpha$ and $1\leq r<\infty $. Suppose that  there exists a constant $\kappa>0$, a function $u\in L^1(Q_0)$ and a nonnegative measurable function $g$ such that 
\begin{equation}\label{kappaAssump0}
\vint_{Q}\lvert u(x)-u_{Q}\rvert\,\dif x
 \leq  {\kappa} \,\ell(Q)^{\alpha}   \left(\vint_Q g(x)^r\,\dif x\right)^{\frac1r}
\end{equation}
for every cube $Q\subset Q_0$. Then there exists a dimensional constant $C(n)$
such that
\begin{equation}\label{eq:pointwise-I1}
\lvert u(x)-u_{Q_0}\rvert \leq 
C(n)\,  \frac{\kappa }{\alpha^{1/r'} \eta^{1/r}}
\,\ell(Q_0)^{\alpha/r'}
\left(I_\alpha(g^r\chi_{Q_0})(x)\right)^{^{\frac1r}} \\
\end{equation}
for every Lebesgue point $x\in Q_0$ of $u$.
\end{lemma}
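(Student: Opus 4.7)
The plan is to chain the point $x$ to $Q_0$ through a dyadic sequence of nested sub-cubes, telescope the oscillation of $u$, apply the hypothesis \eqref{kappaAssump0} on each cube of the chain, and then split the resulting infinite series by H\"older's inequality so that one factor produces the $\ell(Q_0)^{\alpha/r'}/\alpha^{1/r'}$ piece and the other the Riesz potential $I_\alpha(g^r\chi_{Q_0})(x)^{1/r}$. Concretely, I would fix a Lebesgue point $x\in Q_0$ of $u$ and, exactly as in the proof of Lemma~\ref{l.loc_impro}, build closed nested cubes $R_0=Q_0\supset R_1\supset R_2\supset\cdots$ by bisection so that $x\in R_j$ for all $j$ and $r_j:=\ell(R_j)=2^{-j}\ell(Q_0)$. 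Telescoping together with the hypothesis yields
\[
|u(x)-u_{Q_0}|\le\sum_{j=0}^\infty |u_{R_{j+1}}-u_{R_j}|\le 2^n\kappa\sum_{j=0}^\infty r_j^\alpha\Bigl(\vint_{R_j}g^r\Bigr)^{1/r}.
\]

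Next I would apply H\"older's inequality to this series with conjugate exponents $r',r$, writing $r_j^\alpha=r_j^{\alpha/r'}\cdot r_j^{\alpha/r}$, to obtain
\[
\sum_j r_j^\alpha\Bigl(\vint_{R_j}g^r\Bigr)^{1/r}\le\Bigl(\sum_j r_j^\alpha\Bigr)^{1/r'}\Bigl(\sum_j r_j^{\alpha-n}\int_{R_j}g^r(y)\,\dif y\Bigr)^{1/r}.
\]
The first factor is a routine geometric series, $\sum_j r_j^\alpha=\ell(Q_0)^\alpha/(1-2^{-\alpha})\le C\ell(Q_0)^\alpha/\alpha$, which contributes $C\ell(Q_0)^{\alpha/r'}/\alpha^{1/r'}$. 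This split is forced by the requirement that the exponent on $\ell(Q_0)$ in the conclusion be exactly $\alpha/r'$.

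The key work lies in the second factor. Interchanging the sum and integral gives $\int_{Q_0}g^r(y)\sum_{j:\,y\in R_j}r_j^{\alpha-n}\,\dif y$, and for each fixed $y$, with $N(y):=\max\{j:y\in R_j\}$, the inner sum is a geometric series with ratio $2^{n-\alpha}>1$, hence bounded by $r_{N(y)}^{\alpha-n}/(1-2^{-(n-\alpha)})\le C\,r_{N(y)}^{\alpha-n}/(n-\alpha)$. Since both $x$ and $y$ lie in the cube $R_{N(y)}$ we have $|x-y|\le\sqrt{n}\,r_{N(y)}$, so $r_{N(y)}^{\alpha-n}\le c_n|x-y|^{\alpha-n}$, and integrating produces
\[
\sum_j r_j^{\alpha-n}\int_{R_j}g^r(y)\,\dif y\le\frac{C_n}{n-\alpha}\,I_\alpha(g^r\chi_{Q_0})(x).
\]
Combining the two factors gives the bound with constant $C(n)/[\alpha^{1/r'}(n-\alpha)^{1/r}]$, and since $\eta<n-\alpha$ implies $1/(n-\alpha)^{1/r}<1/\eta^{1/r}$, the claimed inequality follows.

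The step I expect to be the main obstacle is the second-factor computation above: the crucial observation is that the geometric growth of $r_j^{\alpha-n}$ (governed by $n-\alpha>0$) collapses the partial sum into its last term with a $1/(n-\alpha)$ loss, and then the elementary containment $|x-y|\le\sqrt{n}\,r_{N(y)}$ converts $r_{N(y)}^{\alpha-n}$ into $|x-y|^{\alpha-n}$, producing exactly the Riesz kernel of order $\alpha$ rather than some fractional-maximal surrogate or a kernel of a different order that any other H\"older split would produce. The factor $1/(n-\alpha)^{1/r}$ that emerges naturally from the geometric series is precisely what is relaxed to the stated $1/\eta^{1/r}$ in the lemma.
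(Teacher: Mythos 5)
Your proof is correct and follows the same global architecture as the paper's: fix a Lebesgue point $x$, build a nested dyadic chain of cubes shrinking to $x$, telescope $|u(x)-u_{Q_0}|$, apply the hypothesis on each cube, and split the resulting series by H\"older with exponents $r',r$ so that the first factor gives $\ell(Q_0)^{\alpha/r'}/\alpha^{1/r'}$ and the second factor must produce $I_\alpha(g^r\chi_{Q_0})(x)^{1/r}$. Where you diverge from the paper is precisely the step you flagged as the main obstacle: the estimate of $\sum_{j}\ell(R_j)^{\alpha-n}\chi_{R_j}(y)$. The paper explicitly remarks that the ``immediate estimate $|x-y|\le\sqrt{n}\ell(Q_k)$ produces an extra unwanted $\log$ factor'' --- meaning that if one applies this crude containment bound to \emph{every} term of the sum, the Riesz kernel $|x-y|^{\alpha-n}$ comes out multiplied by the count $\#\{k: y\in Q_k\}\approx\log(\ell(Q_0)/|x-y|)$ --- and therefore introduces an auxiliary parameter $\eta\in(0,n-\alpha)$, splits $\ell(Q_k)^{\alpha-n}=\ell(Q_k)^{\alpha-n+\eta}\ell(Q_k)^{-\eta}$, pulls the first factor out as $C(n)|x-y|^{\alpha-n+\eta}$, and sums the remaining geometric series $\sum_{k\le k_0(y)}\ell(Q_k)^{-\eta}$ with an explicit cutoff $k_0(y)$; the $\eta$ in the conclusion is inherited from $1-2^{-\eta}\approx\eta$. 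You instead observe that $\sum_{j=0}^{N(y)}r_j^{\alpha-n}$ is already a geometric series with ratio $2^{n-\alpha}>1$, dominated by its last term $r_{N(y)}^{\alpha-n}$ up to the factor $1/(1-2^{-(n-\alpha)})\lesssim 1/(n-\alpha)$, and apply the containment $|x-y|\le\sqrt{n}\,r_{N(y)}$ \emph{once}, to that dominant term. This sidesteps the $\log$ issue entirely without ever introducing $\eta$, gives the cleaner constant $C(n)/(\alpha^{1/r'}(n-\alpha)^{1/r})$, and recovers the paper's statement since $\eta<n-\alpha$ forces $\eta^{-1/r}>(n-\alpha)^{-1/r}$. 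In short: same chaining skeleton, but a genuinely simpler and marginally sharper handling of the critical series.
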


\begin{proof} 
 
This result is well known but  we need to be precise with the main parameters involved. We adapt the main ideas from \cite{FLW} in the case $r=1$ and \cite{FH} when $r>1$ and we also refer to  \cite{LP}.   
Fix a Lebesgue point $x\in Q_0$ of $u$.  Then  there exists a chain $\{Q_k\}_{k\in\N}$ of nested  dyadic subcubes of $Q_0$ such that  $Q_1=Q_0$, $Q_{k+1}\subset Q_k$ with $|Q_k|=2^n |Q_{k+1}|$ for all   $k\in\N$ and
$\{x\}=\bigcap_{k\in\N} Q_k$.
Then, we can use an argument similar to the one in the proof of Lemma \ref{l.loc_impro} to obtain that
\begin{equation*}
\lvert u(x)-u_{Q_0}\rvert=\left \lvert\lim_{k\to\infty} u_{Q_k} - u_{Q_1}\right\rvert\le \sum_{k\in\N} \left\lvert u_{Q_{k+1}}- u_{Q_k}\right\rvert.
\end{equation*}
Using the dyadic structure of the chain and the Poincar\'e-type inequality 
\eqref{kappaAssump0}, we obtain that 
\begin{align*}
\sum_{k\in\N} \left\lvert u_{Q_{k+1}}- u_{Q_k}\right\rvert & \le  \sum_{k\in\N} \frac{1}{|Q_{k+1}|}\int_{Q_{k+1}}\lvert u(y)-u_{Q_k}\rvert\,\dif y\\
& \le  2^n \sum_{k\in\N} \frac{1}{|Q_{k}|}\int_{Q_{k}} \lvert u(y)-u_{Q_k}\rvert\,\dif y\\
& \le  2^n\kappa \sum_{k\in\N} \ell(Q_k)^\alpha\left(\frac{1}{|Q_{k}|}\int_{Q_{k}}g(y)^r\,\dif y \right)^{1/r}\\ 
 &\le  2^n\kappa \left(\sum_{k\in\N} \ell(Q_k)^\alpha\right)^{1/r'} \left(\sum_{k\in\mathbb{N}} \frac{\ell(Q_k)^\alpha}{|Q_{k}|}\int_{Q_{k}}g(y)^r\,\dif y \right)^{1/r}\\ 
& \leq \frac{2^n}{(1-2^{-\alpha})^{1/r'}}\kappa \ell(Q_0)^{\alpha/r'}\left(\int_{Q_0}g(y)^r \sum_{k\in\N} \ell(Q_k)^{\alpha-n}\chi_{Q_k}(y)\,\dif y\right)^{1/r}.
\end{align*}

Note that the immediate estimate $|x-y|\le \sqrt{n}\ell(Q_k)$ produces an extra unwanted $\log$ factor when summing the series. We instead proceed as follows. 
Fix $y\in Q_0\setminus \{x\}$ and pick $0<\eta< n-\alpha$. Write $k_0(y)=\max\{j\in \mathbb{N}: 2^{j-1}\le \sqrt{n}\frac{\ell(Q_0)}{|x-y|}\}$. Then 
\begin{align*}
\sum_{k\in\N} \ell(Q_k)^{\alpha-n}\chi_{Q_k}(y) &\le  \frac{C(n)}{|x-y|^{n-\alpha-\eta}}\sum_{k=1}^{k_0(y)} \ell(Q_k)^{-\eta}\chi_{Q_k}(y)\\
&\le \frac{C(n)}{|x-y|^{n-\alpha-\eta}\ell(Q_0)^\eta}\sum_{k=1}^{k_0(y)} 2^{(k-1)\eta}\\
& \le 
\frac{C(n)\,2^{\eta k_0(y)}}{|x-y|^{n-\alpha-\eta}\ell(Q_0)^\eta(1-2^{-\eta})}
\\&\le  \frac{C(n)}{|x-y|^{n-\alpha}(1-2^{-\eta})}. %
\end{align*}
We conclude that  the desired inequality
\begin{equation*}
\begin{aligned}
|u(x)-u_{Q_0}|&\le  \frac{2^n C(n)^{1/r}\kappa}{(1-2^{-\alpha})^{1/r'}(1-2^{-\eta})^{1/r}}  \ell(Q_0)^{\alpha/r'} \left(\int_{Q_0} \frac{g(y)^r}{|x-y|^{n-\alpha}}\,\dif y\right)^{\frac1r}\\
&\leq
\frac{C(n)\kappa}{\alpha^{1/r'}\eta^{1/r}}  \ell(Q_0)^{\alpha/r'} (I_\alpha(g^r\chi_{Q_0})(x))^{1/r}
\end{aligned}
\end{equation*}
holds for Lebesgue points $x\in Q_0$ of $u$.  
\end{proof}

For the case $\alpha=1$ we get a sharper result. 

\begin{lemma}\label{New-pointwise-A}
Let $Q_0$ be a cube in $\R^n$ with $n\ge 2$.  Suppose there exists a constant $\kappa>0$, a function $u\in L^1(Q_0)$ and a nonnegative measurable function $g$ such that 
\begin{align*}
&\vint_{Q}\lvert u(x)-u_{Q}\rvert\,\dif x
 \leq  {\kappa}\, \ell(Q)\vint_Q g(x)\,\dif x
\end{align*}
for every cube $Q\subset Q_0$.
Then there exists a dimensional constant $C(n)$  such that
\begin{equation} \label{new-LipscType}
\begin{aligned}
 \lvert u(x)-u(y)\rvert   \leq
C(n)\, \kappa  \,\lvert x-y\rvert \min_{z\in\{x,y\}} M(g\chi_{Q_0})(z)^{\frac1n}\max_{z\in\{x,y\}} M(g\chi_{Q_0})(z)^{\frac1{n'}}\\
\end{aligned}
\end{equation}
for every pair $x,y\in Q_0$ of Lebesgue points  of $u$.
\end{lemma}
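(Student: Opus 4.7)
The target bound $|x-y|\,\min^{1/n}\!\max^{1/n'}$ has the structure of the Hedberg-type split produced by Lemma \ref{New-pointwise-B}, inequality \eqref{lem-eq1}: a ``total mass'' factor raised to $1/n$, times a ``maximal function at a point'' factor raised to $(n-1)/n = 1/n'$. My plan is therefore to dominate the oscillation on a small cube $R_0$ containing both $x$ and $y$ by a Riesz potential via Lemma \ref{l.riesz_poinc}, then factor the Riesz potential by \eqref{lem-eq1}, exploiting the fact that the mass factor $\int_{R_0} g$ can be estimated using the maximal function at \emph{either} $x$ or $y$, whichever is smaller.

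First I fix Lebesgue points $x,y\in Q_0$ of $u$ and choose a closed subcube $R_0\subset Q_0$ with $x,y\in R_0$ and $\ell(R_0)\le c_n|x-y|$ (taking $R_0=Q_0$ in the degenerate case $|x-y|\gtrsim \ell(Q_0)$). By the triangle inequality,
\[
|u(x)-u(y)|\le |u(x)-u_{R_0}|+|u(y)-u_{R_0}|.
\]
The assumed Poincar\'e-type inequality passes automatically to every subcube of $R_0$, so Lemma \ref{l.riesz_poinc} applied on $R_0$ with $\alpha=r=1$ and a fixed $\eta\in(0,n-1)$ (available since $n\ge 2$) yields
\[
|u(x)-u_{R_0}|\le C(n)\,\kappa\, I_1(g\chi_{R_0})(x),
\]
and analogously at $y$.

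Next I apply \eqref{lem-eq1} of Lemma \ref{New-pointwise-B} with $\alpha=1$ to the measure $g\,\dif x$:
\[
I_1(g\chi_{R_0})(x)\le C(n)\Bigl(\int_{R_0} g\Bigr)^{1/n} M^c(g\chi_{R_0})(x)^{(n-1)/n}.
\]
The key observation is that, because \emph{both} $x$ and $y$ lie in $R_0$, for either $z\in\{x,y\}$ we have the trivial bound $\int_{R_0} g\le \ell(R_0)^n\, M(g\chi_{Q_0})(z)$, hence
\[
\int_{R_0} g\le \ell(R_0)^n\,\min_{z\in\{x,y\}} M(g\chi_{Q_0})(z).
\]
This is what produces the $\min^{1/n}$ factor. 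Using also $M^c(g\chi_{R_0})(x)\le M(g\chi_{Q_0})(x)$, writing the analogous estimate at $y$, summing, and finally using $a^{1/n'}+b^{1/n'}\le 2\max(a,b)^{1/n'}$ together with $\ell(R_0)\le c_n|x-y|$, I will obtain \eqref{new-LipscType}.

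The main obstacle is really just recognizing the asymmetric use of the mass factor; after that every step is a direct invocation of an already proved lemma. The only mildly technical point is the routine check that a cube $R_0$ with the required properties exists when $x,y$ lie near $\partial Q_0$, and this is handled exactly as in the proof of Lemma \ref{l.loc_impro}.
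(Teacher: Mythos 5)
Your proof is correct and follows essentially the same route as the paper's: reduce to a small cube $R_0$ containing $x,y$ with $\ell(R_0)\lesssim|x-y|$, apply Lemma~\ref{l.riesz_poinc} with $\alpha=r=1$ to bound each oscillation by $I_1(g\chi_{R_0})$, factor via inequality~\eqref{lem-eq1}, and then observe that $\vint_{R_0}g\le M(g\chi_{Q_0})(z)$ for \emph{either} $z\in\{x,y\}$ to produce the $\min$ factor. The paper does exactly this, rewriting $(\int_{R_0}g)^{1/n}$ as $\ell(R_0)(\vint_{R_0}g)^{1/n}$ before taking the $\min$/$\max$.
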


\begin{proof}  
 
Pick two Lebesgue points $x,y\in Q_0$ of $u$,
and let $R_0\subset Q_0$ be a closed cube such that $\ell(R_0)\le \lvert x-y\rvert$ and $x,y\in R_0$. 
The representation formula  \eqref{eq:pointwise-I1} in 
Lemma \ref{l.riesz_poinc}, with $\alpha=r=1$, yields
\begin{equation*}%
\lvert u(x)-u_{R_0}\rvert \leq 
C(n)\,  \kappa \,
I_1(g\chi_{R_0})(x).
\end{equation*}
Next we use inequality \eqref{lem-eq1} in Lemma \ref{New-pointwise-B}  to derive inequality 
\begin{equation*}%
\lvert u(x)-u_{R_0}\rvert \leq 
C(n)\,  \kappa \, \left(\int_{R_0}g(z)\,\dif z\right)^\frac{1}{n}M^c(\chi_{R_0}g)(x)^\frac{1}{n'}.
\end{equation*}
Similar estimates hold for $y$.

Since $x,y\in R_0$, we get 
\begin{align*}
\lvert u(x)-u(y)\rvert  &\leq \lvert u(x)-u_{R_0}\rvert +\lvert u(y)-u_{R_0}\rvert \\
& \leq C(n)\, \kappa \, \left( \left(\int_{R_0}g(z)\,\dif z\right)^\frac{1}{n}M^c(\chi_{R_0}g)(x)^\frac{1}{n'} +  \left(\int_{R_0}g(z)\,\dif z\right)^\frac{1}{n}M^c(\chi_{R_0}g)(y)^\frac{1}{n'}
\right)\\
& =  C(n)\, \kappa \, \ell(R_0) \left(  \vint_{R_0}g(z) \chi_{Q_0}\,\dif z  \right)^\frac{1}{n} \left(
M^c(\chi_{Q_0}g)(x)^\frac{1}{n'} +  M^c(\chi_{Q_0}g)(y)^\frac{1}{n'} \right)\\
& \leq 2C(n)\, \kappa \lvert x-y\rvert 
\min_{z\in\{x,y\}} M(g\chi_{Q_0})(z)^{\frac1n}\max_{z\in\{x,y\}} M(g\chi_{Q_0})(z)^{\frac1{n'}}.
\end{align*}
This is the desired inequality \eqref{new-LipscType}.
\end{proof}

\section{Proofs of theorems \ref{weightedBBMp>1}, \ref{weightedBBMp=1}, and  \ref{weightedBBMp=1-weak}}

In this section we give the proofs of the main theorems of this paper. Each proof is given in a separate subsection.

\subsection{Proof of Theorem \ref{weightedBBMp>1}}
We first let $\mu$ be a Borel measure in $\R^n$; 
at the end of the proof, we will apply the obtained estimates in the case $\dif \mu = w\,\dif x$.
Fix a cube $Q$ in $\R^n$, $n\ge 2$,  and function
$u\in C^1(Q)$.
By applying the  $(1,1)$-Poincar\'e inequality \eqref{1-pPI}, there exists a dimensional constant
$C(n)>0$ such that
\[\vint_{R}\lvert u(x)-u_{R}\rvert\,\dif x \leq  C(n)\, \ell(R)\vint_R \lvert \nabla u(x)\rvert\,\dif x
\]
for every cube $R\subset Q$.
We use Lemma \ref{l.loc_impro} with parameters $\beta=\alpha=1$,
$g=|\nabla u|$ and $\kappa=C(n)$. 
Hence,  for any pair of Lebesgue points $x,y\in Q$ of $u$, we have
\[
|u(x)-u(y)|\leq  C(n)\,|x-y| (M_{Q}(g\chi_Q)(x)+M_{Q}(g\chi_Q)(y)).
\]
Since $u$ is continuous, we know that every point of $Q$ is a Lebesgue point of $u$. By applying the triangle inequality, Tonelli's theorem  and Lemma \ref{New-pointwise-B} twice, 
\[
\begin{split}
 \Bigg{(}\int_Q \int_{Q}  \frac{\lvert u(x)-u(y)\rvert^p}{\lvert x-y\rvert^{n+\delta p}}\,\dif y\, \,\dif \mu(x)\Bigg{)}^{\frac{1}{p}}
& \leq 
C(n)\left(\int_Q \int_{Q} \frac{M_{Q}(g\chi_Q)(x)^p}{\lvert x-y\rvert^{n-p(1-\delta)}}\,\dif y\,\,\dif \mu(x)\right)^{\frac{1}{p}}\\
&\quad+ 
C(n)\left(\int_Q \int_{Q} \frac{M_{Q}(g\chi_Q)(y)^p}{\lvert x-y\rvert^{n-p(1-\delta)}}\,\dif y\,\,\dif \mu(x)\right)^{\frac{1}{p}}\\
& \leq C(n)\frac{ \ell(Q)^{1-\delta} }{ p^{\frac{1}{p}}{(1-\delta)^\frac{1}p}}  \left(\int_Q M(g\chi_Q)(x)^p \,\,\dif \mu(x)\right)^{\frac{1}{p}}\\
&\quad+ 
C(n)\left(\int_Q M(g\chi_Q)(y)^p\int_{Q} \frac{ \dif \mu(x)}{\lvert x-y\rvert^{n-p(1-\delta)}} \,\dif y\,\right)^{\frac{1}{p}}\\
&  \leq C(n)\frac{ \ell(Q)^{1-\delta} }{ {p^{\frac{1}{p}}(1-\delta)^\frac{1}p}}  \left(\int_Q M(g\chi_Q)(x)^p \,\dif \mu(x)\right)^{\frac{1}{p}}\\
&\quad +  C(n)\frac{ \ell(Q)^{1-\delta} }{ {p^{\frac{1}{p}}(1-\delta)^\frac{1}p}} 
\left(\int_Q M(g\chi_Q)(y)^p\,M(\chi_Q\mu)(y) \,\dif y\,\right)^{\frac{1}{p}}.
\end{split}
\] 
Since $1<p<\infty$,  we can apply the Fefferman--Stein inequality \eqref{FSHLStrong} to get 
\[
\begin{split}
&\Bigg{(}\int_Q \int_{Q} \frac{\lvert u(x)-u(y)\rvert^p}{\lvert x-y\rvert^{n+\delta p}}\,\dif y\,\dif \mu(x)\Bigg{)}^{\frac{1}{p}}   \\
&\quad \leq 
C(n) p' \frac{\ell(Q)^{1-\delta} }{ (1-\delta)^\frac{1}p}  \left\{  \,\, \left(\int_Q g(x)^p \,M(\mu\chi_Q)(x) \dif x\right)^{\frac{1}{p}}
+
\left(\int_Q g(y)^p\,M^2(\mu\chi_Q)(y)\,\dif y\,\right)^{\frac{1}{p}}\right\}.
\end{split}
\]
Since $M(\mu\chi_Q) \leq M^2(\mu\chi_Q)$  almost everywhere, this finishes the proof of the inequality
\begin{equation}\label{e.general1}
\begin{split}
\ell(Q)^{\delta} \Bigg{(}\int_Q \int_{Q} & \frac{\lvert u(x)-u(y)\rvert^p}{\lvert x-y\rvert^{n+\delta p}}\,\dif y\,\dif \mu(x)\Bigg{)}^{\frac{1}{p}} \\&\quad   \leq \frac{C(n)\,p'}{ (1-\delta)^\frac{1}p} \ell(Q)  \left(\int_Q |\nabla u(x)|^p \,M^2(\chi_Q\mu)(x)\,\dif x\right)^{\frac{1}{p}}
\end{split}
\end{equation}
for Borel measures $\mu$ in $\R^n$. 
If $\dif \mu(x)= w(x)\,\dif x $ for some $w\in A_1$, then $M^2(w\chi_Q)\le [w]_{A_1}^2w$ almost everywhere, and this implies the second inequality in the theorem.

\subsection{Proof of Theorem \ref{weightedBBMp=1}}\label{s.ps}
We first let $\mu$ be a Borel measure in $\R^n$; 
at the end of the proof, we will apply the obtained estimate in the case $\dif \mu = w\,\dif x$.
Fix a cube $Q$ in $\R^n$ and a function
$u\in C^1(Q)$.
By the  $(1,1)$-Poincar\'e inequality \eqref{1-pPI}, there exists a dimensional constant
$C(n)>0$ such that
\[\vint_{R}\lvert u(x)-u_{R}\rvert\,\dif x \leq  C(n)\, \ell(R)\vint_R \lvert \nabla u(x)\rvert\,\dif x
\]
for every cube $R\subset Q$.
We use Lemma \ref{New-pointwise-A} with parameters 
$g=|\nabla u|$ and $\kappa=C(n)$. Hence, there exists a dimensional constant $C(n)$ such that, for every pair of Lebesgue points $x,y\in Q$ of $u$
\begin{align*}
\lvert u(x)-u(y)\rvert  \leq 
C(n)\,\lvert x-y\rvert  \min_{z\in\{x,y\}} M(g\chi_{Q_0})(z)^{\frac1n}\max_{z\in\{x,y\}} M(g\chi_{Q_0})(z)^{\frac1{n'}}.
\end{align*}
Let us define the set
\[
A=\{ (x,y)\in Q\times Q : M(g\chi_{Q})(x) \leq M(g\chi_{Q})(y) \}.
\]
Since every point of $Q$ is a Lebesgue point of $u\in C^1(Q)$, we get
\[
\begin{split}
 \int_{Q} \int_{Q} &\frac{\lvert u(x)-u(y)\rvert}{\lvert x-y\rvert^{n+\delta}}\,\dif y\,\dif \mu(x)\\
&\leq
\int_{Q} \int_{Q}   \frac{\min_{z\in\{x,y\}} M(g\chi_{Q_0})(z)^{\frac1n}\max_{z\in\{x,y\}} M(g\chi_{Q_0})(z)^{\frac1{n'}}}{\lvert x-y\rvert^{n-(1-\delta)}}\,\dif y\,\dif \mu(x)\\
&=\int_{Q}\int_Q \chi_{A}(x,y) \frac{M(g\chi_{Q})(x)^{\frac1n}\,M(g\chi_{Q})(y)^{\frac1{n'}}}{\lvert x-y\rvert^{n-(1-\delta)}}\,\dif y\,\dif \mu(x)\\
&\qquad\qquad +
\int_Q\int_Q \chi_{(Q\times Q)\setminus A}(x,y) \frac{M(g\chi_{Q})(y)^{\frac1n}\,M(g\chi_{Q})(x)^{\frac1{n'}}}{\lvert x-y\rvert^{n-(1-\delta)}}\,\dif y\,\dif \mu(x)\\
&= I+II.
\end{split}
\]
We work first with $I$. By Tonelli's theorem,
\begin{align*}
I & \leq 
\int_{Q} M(g\chi_{Q})(x)^{\frac1n}\, 
\int_{Q} \frac{M(g\chi_{Q})(y)^{\frac1{n'}}}{\lvert x-y\rvert^{n-(1-\delta)}}\,\dif y\,\dif \mu(x)\\
&=\int_{Q} M(g\chi_{Q})(x)^{\frac1n}\, I_{1-\delta} (\chi_{Q} M(g\chi_{Q})^{\frac1{n'}} )(x) 
\,\dif \mu(x).
\end{align*}
By the Coifman--Rochberg lemma \cite[pp. 158--159]{GCRdF} we know that $M(g\chi_{Q})(y)^{\frac1{n'}}\in A_1$ with a constant depending on the dimension. In particular, we get
\[M^c(\chi_QM(g\chi_{Q})^{\frac1{n'}})\le C(n)M(g\chi_{Q})^{\frac1{n'}}\]
almost everywhere in $Q$.  We use this estimate together with 
inequality \eqref{lem-eq1} in Lemma \ref{New-pointwise-B} to get
\begin{align*}
I&\leq \frac{C(n)}{1-\delta}\left( \int_{Q} M(g\chi_{Q})(x)^{\frac1{n'}} \,\dif x\right)^{\frac{1-\delta}{n} }
\int_{Q} M(g\chi_{Q})(x)^{\frac1n}\, 
M(g\chi_{Q})(x)^{\frac{n-(1-\delta)}{nn'} }
\,\dif \mu(x)\\
&= \frac{C(n)}{1-\delta}\ell(Q)^{1-\delta} \,\left( \vint_{Q} M(g\chi_{Q})(x)^{\frac1{n'}} \,\dif x\right)^{\frac{1-\delta}{n} }
\int_{Q} M(g\chi_{Q})(x)^{1-\frac{1-\delta}{nn'}}\,\dif \mu(x).
\end{align*}
To estimate the last two integrals we use Kolmogorov's inequality \eqref{kolmogorov}. By using also the weak type $(1,1)$
estimate for the maximal operator, we estimate the first integral
\[
\vint_{Q} M(g\chi_{Q})(x)^{\frac1{n'}} \,\dif x \le C(n)\| M(g\chi_{Q}) \|^\frac{1}{n'}_{L^{1,\infty}\big(Q, \frac{\dif x}{\lvert Q\rvert}\big)} 
\leq C(n)\left( \vint_{Q} g\chi_{Q}(x) \,\dif x\right)^{\frac1{n'}}.
\]
We estimate the second integral using Kolmogorov's inequality \eqref{kolmogorov} and the Fefferman--Stein  inequality \eqref{weightedweaktype}, thus getting
\begin{align*}
\frac{1}{\mu(Q)} \int_{Q} \, 
M(g\chi_{Q})(x)^{1-\frac{1-\delta}{nn'}}\, 
\,\dif \mu(x)
&\leq 
\frac{C(n)}{1-\delta}\, \| M(g\chi_{Q})\|^{1-\frac{1-\delta}{nn'}}_{L^{1,\infty}\big(Q, \frac{\dif \mu}{\mu(Q)}\big)}\\
&\leq
\frac{C(n)}{1-\delta}\,  \left( \sup_{t>0}\frac{t}{\mu(Q)}  \mu\{x\in \R^n: M(g\chi_{Q})(x)>t\} \right)^{1-\frac{1-\delta}{nn'}}
\\&\leq
\frac{C(n)}{1-\delta}\,  \left( \frac{1}{\mu(Q)}  \int_{Q} g(x)\,M\mu(x)\,\dif x \right)^{1-\frac{1-\delta}{nn'}}.
\end{align*}
We have shown that
\begin{equation*}
I \le \frac{C(n)}{(1-\delta)^2}\ell(Q)^{1-\delta} \, \left( \vint_{Q} g\chi_{Q}(x) \,\dif x\right)^{\frac{1-\delta}{nn'}}
\left( \frac{1}{\mu(Q)}  \int_{Q} g(x)\,M\mu(x)\,\dif x \right)^{1-\frac{1-\delta}{nn'}}\mu(Q)\,.
\end{equation*}
for Borel measures $\mu$ in $\R^n$.
If we choose the measure $\mu$ to be defined through an $A_1$ weight $w$, we get
\begin{align*}
I&\leq 
\frac{C(n)}{(1-\delta)^2}\ell(Q)^{1-\delta} \, \left( \vint_{Q} g\chi_{Q}(x) \,\dif x\right)^{\frac{1-\delta}{nn'}}
\left( \frac{1}{w(Q)}  \int_{Q} g(x)\,Mw(x)\,\dif x \right)^{1-\frac{1-\delta}{nn'}}\, w(Q)\\
&\leq 
\frac{C(n)}{(1-\delta)^2}\ell(Q)^{1-\delta} \, \left( \frac{[w]_{A_1}}{w(Q)}\int_{Q} g (x) \,w(x)\,\dif x\right)^{\frac{1-\delta}{nn'}}
\left( \frac{[w]_{A_1}}{w(Q)}  \int_{Q} g(x)\,w(x)\,\dif x \right)^{1-\frac{1-\delta}{nn'}}\, w(Q)\\
&\leq 
\frac{C(n)\,[w]_{A_1}}{(1-\delta)^2} \,  \ell(Q)^{1-\delta} \,\int_{Q} g (x) \,w(x)\,\dif x.
\end{align*}
Since the term $II$ can be treated in the same way, we have finished the proof
of the theorem.

%
%
%


\subsection{Proof of Theorem \ref{weightedBBMp=1-weak}}

We adapt the non-weighted proof from \cite{BVSY}.

We have to prove that 
\begin{equation*}
\bigg{\|} \frac{u(x)-u(y)}{\lvert x-y\rvert^{n+\delta}} \bigg{\|}_{L^{1,\infty}\big(Q \times Q,  \dif w(x) \times \dif y\big)}
\leq C(n)\, %
 \frac{[w]_{A_1}^{ 2+\frac{1-\delta}{n}} }{ \delta(1-\delta)}   
\ell(Q)^{1-\delta}   \int_Q |\nabla u(x)|w(x)\,\dif x.
\end{equation*}
By  the  $(1,1)$-Poincar\'e inequality \eqref{1-pPI}, there exists a dimensional constant
$C(n)>0$ such that
\[\vint_{R}\lvert u(x)-u_{R}\rvert\,\dif x \leq  c_n\, \ell(R)\vint_R \lvert \nabla u(x)\rvert\,\dif x
\]
for every cube $R\subset Q$.
We use Lemma \ref{l.loc_impro} 
with $g=|\nabla u|$,\, $\kappa=C(n)$
and $0<\beta=\delta<1=\alpha$.  Hence, for any pair of Lebesgue points $x,y\in Q$ of $u$, the inequality  
\[
|u(x)-u(y)|\leq  \,\frac{C(n)}{\delta}\,|x-y|^{\delta} (M_{1-\delta,Q}(g\chi_Q)(x)+M_{1-\delta,Q}(g\chi_Q)(y))
\]
holds.

Thus, we can estimate
\[
\begin{split}
&\bigg{\|} \frac{u(x)-u(y)}{\lvert x-y\rvert^{n+\delta}} \bigg{\|}_{L^{1,\infty}\big(Q \times Q,  \dif w(x) \times \dif y\big)}\\
&\leq \frac{C(n)}{\delta}\,
\bigg{\|} \frac{M_{1-\delta,Q}g(x)}{\lvert x-y\rvert^{n}} \bigg{\|}_{L^{1,\infty}\big(Q \times Q,  \dif w(x) \times \dif y\big)}+
\frac{C(n)}{\delta}\, \bigg{\|} \frac{M_{1-\delta,Q}g(y)}{\lvert x-y\rvert^{n}} \bigg{\|}_{L^{1,\infty}\big(Q \times Q,  \dif w(x) \times \dif y\big)}\\&=\frac{C(n)}{\delta}\,(I+II).
\end{split}
\]

In order to estimate $I$, we write
\[
E_t=  \left\{ (x,y):   \frac{M_{1-\delta,Q}g(x)}{\lvert x-y\rvert^{n}}>t      \right\}
\]
for every $t>0$.
Then, by definition of the weak quasinorm and Tonelli's theorem,
\[
I= \sup_{t>0} t \int_{Q} \int_{Q}  \car {E_t}( x,y) w(x)\,\dif x\,\dif y =\sup_{t>0} t \int_{Q} \int_{Q}  \car {E_t}( x,y) \dif y\, w(x)\,\dif x\,.
\]
For a fixed $x$, we easily see that $\chi_{E_t}(x,\cdot)$ is 
the characteristic function of 
the ball centered at $x$ with radius  
\[
\left( \frac{M_{1-\delta,Q}g(x)}{t} \right)^{ \frac{1}{{n}}},
\]
and hence       
\begin{equation}\label{endI}
I\leq C(n)\sup_{t>0} \,t \int_{Q}   \frac{M_{1-\delta,Q}g(x)}{t}    w(x)\,\dif x
=  C(n)\int_{Q}   M_{1-\delta,Q}g(x)  w(x)\,\dif x.
\end{equation}
Using Lemma \ref{l.riesz_A1}, we finally get
$$
I \leq C(n)  \frac{[w]_{A_1}^{ 1+\frac{1-\delta}{n}} }{1-\delta}   \ell(Q)^{1-\delta} \int_Q g(x) \, w(x)\,\dif x,
$$
and this yields the estimate for $I$.

It remains to estimate the second term
\[
II= \sup_{t>0} \,t \int_{Q} \int_{Q}  \car {F_t}( x,y) w(x)\,\dif x\,\dif y,
\]
where %
\[
 F_t =  \left\{ (x,y):   \frac{M_{1-\delta,Q}g(y)}{\lvert x-y\rvert^{n}}>t     \right\}\,.
\]
For a fixed $y$, we see that $\chi_{F_t}(\cdot,y)$ is the
characteristic function of the  ball centered at $y$ and with radius  
\[
 r_{y,t}= \left( \frac{M_{1-\delta,Q}g(y)}{t} \right)^{\frac1n}.
\]
Hence,
\begin{align*}
II&\leq \sup_{t>0} t \int_{Q}  \, w(B(y, r_{y,t}))\,\dif y\\
&\leq \sup_{t>0} t \int_{Q}  \, \frac{ w(Q(y, r_{y,t})) }
{\lvert Q(y, r_{y,t})\rvert}\,  \lvert Q(y, r_{y,t})\rvert\,\dif y\\
&\leq  c_n  \int_{Q}  M^{c}w(y)\, M_{1-\delta,Q}g(y) \,\dif y 
\leq  c_n  [w]_{A_1} \int_{Q}  M_{1-\delta,Q}g(y) \,w(y)\dif y\,.
\end{align*}
Using Lemma \ref{l.riesz_A1}, 
$$
II \leq c_n  \frac{[w]_{A_1}^{ 2+\frac{1-\delta}{n}} }{1-\delta}   \ell(Q)^{1-\delta} \int_Q g(x) \, w(x)\,\dif x,
$$
and this yields the estimate for $II$. This concludes the proof of the theorem. 

%

\section{Appendix }\label{Appendix}

In this appendix  we prove
inequality \eqref{e.A}  which appears already 
in \cite[Theorem 1]{BBM1}.
Our proof is based on very elementary calculations. 
The case $p=1$ of this approach appears  in \cite{CMPR}.

\begin{lemma}\label{l.fractional_functional}
 Let $Q$ be a cube in $\R^n$
and let $u\in C^1(Q)$.  Let $0<\delta<1$  and  $1\leq p<\infty$.
There is a dimensional constant $C(n)>0$ such that 
\begin{equation}\label{fractional_lhs}
\left(\vint_Q \int_Q\frac{|u(x)-u(y)|^p}{|x-y|^{n+\delta p}}\,\dif y\,\dif x\right)^{\frac{1}{p}} \leq 
\frac{C(n)}{ \alpha(\delta,p) } \ell(Q)^{1-\delta}\left(\vint_Q |\nabla u(x)|^p\,\dif x\right)^{\frac{1}{p}}.
\end{equation}
Furthermore, %
$$
\alpha(\delta,p)=\left\lbrace\begin{array}{rcl}   (1-(1-\delta)p)^{\frac1p} \, ((1-\delta)p)^{\frac1p} &if&  (1-\delta)p<1, \\ 
((1-\delta)p-1)^{\frac1p}  &if& (1-\delta)p>1,
\\  1        &if& (1-\delta)p=1  \end{array}\right.
$$
and hence $\alpha(\delta,p) \approx (1-\delta)^{\frac1p}$ as $\delta\to1$. 

\end{lemma}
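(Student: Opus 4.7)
The plan is to combine the fundamental theorem of calculus along the line segment joining $x$ and $y$ with a passage to polar coordinates centered at $x$, and then to exchange the resulting $r$- and $s$-integrations by Fubini. For any $x,y\in Q$ with $\omega=(y-x)/|y-x|$, Hölder's inequality applied to $u(y)-u(x)=\int_0^{|y-x|}\nabla u(x+s\omega)\cdot\omega\,\dif s$ with exponents $p$ and $p'$ gives the pointwise bound
\[
|u(x)-u(y)|^p\leq |y-x|^{p-1}\int_0^{|y-x|}|\nabla u(x+s\omega)|^p\,\dif s.
\]

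Inserting this estimate into the double integral and switching to polar coordinates $y=x+r\omega$, $\dif y=r^{n-1}\,\dif r\,\dif\omega$, the inner $y$-integral is bounded by
\[
\int_{S^{n-1}}\int_0^{R(x,\omega)}r^{(1-\delta)p-2}\int_0^r|\nabla u(x+s\omega)|^p\,\dif s\,\dif r\,\dif \omega,
\]
where $R(x,\omega)\leq \sqrt{n}\,\ell(Q)$ is the distance from $x$ to $\partial Q$ along $\omega$. Applying Fubini to the $(s,r)$-region $\{0<s<r<R\}$ and setting $\gamma:=(1-\delta)p-1$, one evaluates $\int_s^R r^{\gamma-1}\,\dif r$: for $\gamma<0$ this is at most $s^\gamma/|\gamma|=s^{(1-\delta)p-1}/(1-(1-\delta)p)$, while for $\gamma>0$ it is at most $R^\gamma/\gamma\leq C(n)\ell(Q)^{(1-\delta)p-1}/((1-\delta)p-1)$.

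I would then return from $(s,\omega)$ back to the Cartesian variable $y$ via $\dif s\,\dif\omega=|y-x|^{1-n}\,\dif y$, producing a Riesz-type integral in $y\in Q$, and integrate in $x$ by Tonelli. In the case $(1-\delta)p<1$ the remaining $x$-integral $\int_Q|y-x|^{-(n-(1-\delta)p)}\,\dif x$ is controlled by $C(n)\ell(Q)^{(1-\delta)p}/((1-\delta)p)$; combining this with the factor $1/(1-(1-\delta)p)$ from the preceding step and extracting a $p$-th root produces the first form of $\alpha(\delta,p)$. In the case $(1-\delta)p>1$ the $x$-integral is instead $\int_Q|y-x|^{-(n-1)}\,\dif x\leq C(n)\ell(Q)$, and this yields $\alpha(\delta,p)=((1-\delta)p-1)^{1/p}$ after taking the $p$-th root.

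The main obstacle is the critical endpoint $(1-\delta)p=1$, where the Fubini step produces the logarithmic integral $\int_s^R r^{-1}\,\dif r=\log(R/s)$ and the direct bounding strategy does not give a uniform constant. I would resolve this in that single case by an alternative route --- for instance applying Minkowski's integral inequality to the translation representation $u(x+h)-u(x)=\int_0^1\nabla u(x+th)\cdot h\,\dif t$ and changing variables $z=x+th$, which yields a $t$-independent bound of order $((1-\delta)p)^{-1/p}=1$ --- or alternatively passing to the limit as $(1-\delta)p\to 1$, using the continuity of both sides of the inequality in the parameter $\delta$.
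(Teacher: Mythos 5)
Your plan follows essentially the same route as the paper's proof of this lemma: write $u(y)-u(x)$ by the fundamental theorem of calculus along the segment, apply H\"older to absorb the $p$-th power, change variables so the gradient sits at a single point, apply Tonelli, and split into the three cases according to the sign of $(1-\delta)p-1$. (The paper parameterizes by $t\in(0,1)$ and substitutes $z=x+t(y-x)$, whereas you parameterize by arc length $s$ and pass through polar coordinates; these are equivalent bookkeeping choices and produce the same $t$- or $r$-integral in the end.) Your subcritical and supercritical computations are correct and give the stated $\alpha(\delta,p)$ in those two cases.

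The gap is the critical case $(1-\delta)p=1$, and your two proposed fixes are not on equal footing. The limit argument does not work: in both formulas for $\alpha(\delta,p)$ the constant $C(n)/\alpha(\delta,p)$ blows up as $(1-\delta)p\to 1$ from either side (since $1-(1-\delta)p\to 0$ on one side and $(1-\delta)p-1\to 0$ on the other), so there is no uniform bound to pass to the limit. By contrast, the Minkowski idea you sketch is sound: by Minkowski's integral inequality and the substitution $z=x+th$ one obtains, for each fixed $t\in(0,1)$, $t^{-(1-\delta)p}\vint_Q|\nabla u(z)|^p\int_{Q\cap B(z,t\sqrt{n}\ell(Q))}|z-x|^{-(n-(1-\delta)p)}\,\dif x\,\dif z$, and the inner $x$-integral contributes $C(n)(t\ell(Q))^{(1-\delta)p}/((1-\delta)p)$, so the $t$-dependence cancels exactly; this gives the bound with $\alpha=((1-\delta)p)^{1/p}$, which equals $1$ at the critical point (and in fact is valid whenever $(1-\delta)p<n$, covering the subcritical case with a slightly better constant). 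However you have not written this out, and as stated it is a one-line sketch.

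For completeness: the paper resolves the critical case inside the same framework rather than switching arguments, by bounding the logarithm $\log(R/s)\le q^{-1}(R/s)^q$ with $q=\tfrac12$. In your notation, this trades $\int_s^R r^{-1}\,\dif r=\log(R/s)$ for $2(R/s)^{1/2}$, after which the $x$-integral has the integrable kernel $|y-x|^{-(n-1/2)}$ and the extra $\ell(Q)^{1/2}$ restores the correct power $\ell(Q)^{(1-\delta)p}=\ell(Q)$. If you want to stay within your Fubini scheme, this is the clean way to close the remaining case; otherwise you should fully develop the Minkowski computation and drop the limit argument.
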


\begin{proof}
Let us first prove $(1)$.
By the Fundamental Theorem of Calculus one can write, for every $x,y\in Q$, 
\[
u(y)-u(x)=\int_0^1\nabla u(x+t(y-x))\cdot(y-x)\,\dif t,
\]
where $\cdot$ represents the usual scalar product in $\mathbb{R}^n$. 
Thus, using this equality, H\"older's inequality and Tonelli's theorem,

\begin{equation}
\begin{aligned}
&\left(\vint_Q\int_Q\frac{|u(x)-u(y)|^p}{|x-y|^{n+\delta p}}\,\dif y\,\dif x\right)^{\frac{1}{p}}\\
&\qquad \leq \left(\vint_Q\int_Q\int_0^1\frac{|\nabla u(x+t(y-x))|^p}{|x-y|^{n+\delta p-p}}\dif t\,\dif y\,\dif x\right)^{\frac{1}{p}}\\
&\qquad  =\left(\vint_Q\int_0^1\int_{Q\cap B(x,\sqrt{n}\ell(Q))}\frac{|\nabla u(x+t(y-x))|^p}{|x-y|^{n-(1-\delta)p}}\,\dif y\,\dif t\,\dif x\right)^{\frac{1}{p}},
\end{aligned}
\end{equation}
since $Q\subset B(x,\sqrt{n}\ell(Q))$ for any $x\in Q$. By change of variables $z=x+t(y-x)=(1-t)x+ty$, one has
\begin{enumerate}
\item By convexity, $x,y\in Q$ implies $z\in Q$, so $\chi_Q(y)=\chi_Q(z)$.
\item $|x-y|=|z-x|/t$.
\end{enumerate}
Thus, we continue with 
\begin{equation}
\begin{aligned}
& \qquad=\left(\vint_Q\int_0^1\int_{ ((1-t)x+tQ) \cap B(x,\sqrt{n}t\ell(Q))} \frac{|\nabla u(z)|^p}{|z-x|^{n-(1-\delta)p}}\frac{t^{n-(1-\delta)p}}{t^n}\,\dif z\,\dif t\,\dif x\right)^{\frac{1}{p}}\\
&\qquad \leq   \left(\vint_Q\int_0^1\int_{Q\cap B(x,\sqrt{n}t\ell(Q))} \frac{|\nabla u(z)|^p}{|z-x|^{n-(1-\delta)p}}t^{-(1-\delta)p}\,\dif z\,\dif t\,\dif x\right)^{\frac{1}{p}}\\
&\qquad =\left(\vint_Q\int_Q\int_{\frac{|z-x|}{\sqrt{n}\ell(Q)}}^1 \frac{\dif t}{t^{(1-\delta)p}} \frac{|\nabla u(z)|^p}{|z-x|^{n-(1-\delta)p}}\,\dif z\,\dif x\right)^{\frac{1}{p}},
\end{aligned}
\end{equation}
where we used Tonelli's theorem again in the last equality.

There are three possibilities depending on the value of $(1-\delta)p$.

{\bf Case 1.}  If $(1-\delta)p<1$, then we can extend the integral to zero, and we get the following upper bound 
\begin{equation}\label{<1}
\begin{aligned}
&  \frac{1}{(1-(1-\delta)p)^{\frac1p}  } \,  \left(\vint_Q\int_Q   \frac{|\nabla u(z)|^p}{|z-x|^{n-(1-\delta)p}}\,\dif z\,\dif x\right)^{\frac{1}{p}}\\
&\qquad =  \frac{1}{(1-(1-\delta)p)^{\frac1p}  } \, \left(\vint_Q|\nabla u(z)|^p \int_Q  \frac{\dif x}{|z-x|^{n-(1-\delta)p}}\,\dif z\right)^{\frac{1}{p}}\\
&\qquad \leq  \frac{1}{(1-(1-\delta)p)^{\frac1p}  } \,\left(\vint_Q|\nabla u(z)|^p \frac{C(n)\ell(Q)^{(1-\delta)p}}{v_n^{(1-\delta)p/n}(1-\delta)p}\dif z\right)^{\frac{1}{p}}\\
&\qquad \le \frac{C(n)\ell(Q)^{1-\delta}}{v_n^{\frac{1-\delta}{n}}((1-\delta)p)^{\frac{1}{p}} (1-(1-\delta)p)^{\frac1p} }\left(\vint_Q|\nabla u(x)|^p\,\dif x\right)^{\frac{1}{p}},
\end{aligned}
\end{equation}
where we used Tonelli's theorem and the  fact that, for a Lebesgue measurable set $E$ and $0<\alpha<n$,
 \begin{equation}\label{lemita}
\int_E \frac{\dif z}{|x-z|^{n-\alpha}} \le C(n) v_n^{-\frac{\alpha}{n}}\alpha^{-1}|E|^{\frac{\alpha}{n}},\qquad \text{for all }x\in\R^n,
\end{equation}
where $v_n$ is the volume of the unit ball of $\mathbb{R}^n$.
 See Lemma \ref{New-pointwise-B}. 

{\bf Case 2.} If $(1-\delta)p>1$, then we can extend the upper bound of the integral in $t$ up to infinity and then compute it. This way we obtain the following bound
\begin{equation}
\begin{aligned}
& \left(\vint_Q\int_Q  \frac{|z-x|^{1-(1-\delta)p}}{((1-\delta)p-1)(\sqrt n\ell(Q))^{1-(1-\delta)p}} \frac{|\nabla u(z)|^p}{|z-x|^{n-(1-\delta)p}}\,\dif z\,\dif x\right)^{\frac{1}{p}}\\
&\qquad \leq  \frac{\sqrt n}{((1-\delta)p-1)^{\frac1p}} \ell(Q)^{-\frac{1}{p}+1-\delta} \left(\vint_Q|\nabla u(z)|^p\int_Q   \frac{\dif x}{|z-x|^{n-1}} \,\dif z\right)^{\frac{1}{p}}\\
&\qquad \leq    \frac{\sqrt n}{((1-\delta)p-1)^{\frac1p}}\,\ell(Q)^{ -\frac{1}{p}+1-\delta } \left(\vint_Q    |\nabla u(z)|^p  C(n) \ell(Q)\dif z\right)^{\frac{1}{p}}\\
&\qquad \leq \frac{C(n)}{((1-\delta)p-1)^{\frac1p}}\,\ell(Q)^{1-\delta }\left(\vint_Q    |\nabla u(z)|^p \dif z\right)^{\frac{1}{p}},
\end{aligned}
\end{equation}
where again we used Tonelli's theorem and inequality \eqref{lemita}.

{\bf Case 3.} If $(1-\delta)p=1$, then we compute the integral in $t$ and use the elementary inequality 
\[\log s\le \frac{s^{q}-1}{q}\leq \frac{s^q}{q}\] which holds whenever $s>1$ and $q>0$. Applying this, say,  with $q=\frac{1}{2}$ we get the upper bound
\begin{equation}
\begin{aligned}
& \left(\vint_Q\int_Q  \log\left(\frac{\sqrt{n}\ell(Q)}{|z-x|}\right)\frac{|\nabla u(z)|^p}{|z-x|^{n-1}}\dif z\dif x\right)^{\frac{1}{p}}\\
&\qquad \le  C(n) \ell(Q)^{\frac{q}{p}} \left(\vint_Q|\nabla u(z)|^p\int_Q   \frac{\dif x}{|z-x|^{n-(1-q)}}\dif z\right)^{\frac{1}{p}}\\
&\qquad \le  C(n)\, \ell(Q)^{ \frac{q}{p}+\frac{(1-q)}{p} } \left(\vint_Q    |\nabla u(z)|^p \dif z\right)^{\frac{1}{p}}\\
&\qquad =C(n)\,\ell(Q)^{1-\delta}\left(\vint_Q    |\nabla u(z)|^p \dif z\right)^{\frac{1}{p}},
\end{aligned}
\end{equation}
where Tonelli's theorem and inequality \eqref{lemita} have been used one more time.
\end{proof}%

\begin{remark}
Let $Q$ be a cube in $\R^n$ and let $u\in L^1(Q)$. Let $0<\beta\le \delta<1$ and $1\le p<\infty$. Then it is straightforward to show that
\[
\ell(Q)^\beta\left(\vint_Q \int_Q\frac{|u(x)-u(y)|^p}{|x-y|^{n+\beta p}}\,\dif y\,\dif x\right)^{\frac{1}{p}} \le 
C(n)\ell(Q)^\delta \left(\vint_Q \int_Q\frac{|u(x)-u(y)|^p}{|x-y|^{n+\delta p}}\,\dif y\,\dif x\right)^{\frac{1}{p}}\,.
\]
\end{remark}

\bibliographystyle{abbrv}

\end{document}